\documentclass[12pt]{amsart}

\usepackage[dvips]{color}
\usepackage{amsmath}
\usepackage{amsxtra}
\usepackage{amscd}
\usepackage{amsthm}
\usepackage{amsfonts}
\usepackage{amssymb}
\usepackage[mathscr]{eucal}
\usepackage{epsfig}
\usepackage{graphics}
\usepackage{accents}
\usepackage{physics}
\usepackage{hyperref}
\usepackage{mathtools}
\usepackage{enumitem}
\usepackage{tikz}
\usepackage{stmaryrd}
\usepackage{verbatim}
\usepackage[normalem]{ulem}

\textwidth=18cm
\textheight=22cm
\hoffset=-2.8cm
\baselineskip=18pt plus 3pt

\numberwithin{equation}{section}
\newtheorem{thm}{Theorem}[section]
\newtheorem{prop}[thm]{Proposition}
\newtheorem{lem}[thm]{Lemma}
\newtheorem{rem}[thm]{Remark}
\newtheorem{cor}[thm]{Corollary}

\newtheorem{dfn}[thm]{Definition}


\newcommand{\C}{{\mathbb C}}

\newcommand{\Z}{{\mathbb Z}}
\newcommand{\Zz}{{\mathbb Z}^\times}
\newcommand{\Zg}{{\mathbb Z}_{>0}}
\newcommand{\Zge}{{\mathbb Z}_{\geq 0}}
\newcommand{\A}{{\mathbb A}}


\newcommand{\B}{{\mathcal B}}

\newcommand{\cH}{\mathcal H}

\newcommand{\cP}{\mathcal{P}}

\newcommand{\cU}{\mathcal{U}}

\newcommand{\ol}{\overline}





\newcommand{\bs}{\boldsymbol}
\newcommand{\n}{{\mathfrak{n}}}

\newcommand{\gl}{\mathfrak{gl}}

\newcommand{\fpsl}{\mathfrak{psl}}
\renewcommand{\sl}{\mathfrak{sl}}
\newcommand{\osp}{\mathfrak{osp}}

\newcommand{\h}{\mathfrak{h}}

\newcommand{\hh}{\widehat{\mathfrak{h}}}

\newcommand{\Uqh}{U_q({\mathfrak{h}})}

\newcommand{\Ugh}{U(\widehat{\mathfrak{g}})}

\newcommand{\Uqgh}{U_q(\widehat{\mathfrak{g}})}
\newcommand{\Uqhh}{U_q(\widehat{\mathfrak{h}})}

\newcommand{\slhmn}{ \widehat{\sl}(m|n)}


\newcommand{\D}{\Delta}
\newcommand{\hD}{\hat\Delta}

\newcommand{\s}{\mathbf{s}}



\newcommand{\Sym}{\mathrm{Sym}}



\newcommand{\al}{\alpha}

\newcommand{\qK}[3]{{\left[
\begin{matrix}{\displaystyle #1\, ; \  #2}\\
{\ \,\displaystyle #3\,}\end{matrix}
\right]
}}

\newcommand{\cN}{\mathcal{N}}

\newcommand{\gh}{\widehat{\mathfrak{g}}}
\newcommand{\g}{\mathfrak{g}}

\DeclareMathOperator{\Ind}{Ind}

\newcommand{\lb}[1]{\llbracket #1 \rrbracket}
\newcommand{\htt}[1]{\mathrm{ht}( #1 )}

\begin{document}
\begin{title}[Diagonal modules]{Representations of affine  Lie superalgebras  and their quantization in type $A$ }
\end{title}
\author[L. Bezerra, L. Calixto, V. Futorny and I.Kashuba]{Luan Bezerra, Lucas Calixto, Vyacheslav Futorny  and Iryna Kashuba}

\address[bezerra.luan@gmail.com ]{\textsc{Luan Pereira Bezerra}: Institute of Mathematics and Statistics, University of S\~ao Paulo, S\~ao Paulo, Brazil}
\address[lhcalixto@ufmg.br]{\textsc{Lucas Calixto}: Department of Mathematics, Federal University of Minas Gerais, Belo Horizonte, Brazil}
\address[futorny@ime.usp.br]{\textsc{Vyacheslav Futorny}: Institute of Mathematics and Statistics, University of S\~ao Paulo, S\~ao Paulo, Brazil,\\  and  International Center for Mathematics, SUSTech, Shenzhen, China}
\address[kashuba@ime.usp.br]{\textsc{Iryna Kashuba}: Institute of Mathematics and Statistics, University of S\~ao Paulo, S\~ao Paulo, Brazil}
\maketitle

\begin{abstract}
We construct a new family of irreducible modules over any basic classical affine Kac-Moody Lie superalgebra which are induced from modules over the Heisenberg subalgebra. We also obtain irreducible deformations of these modules for the quantum affine superalgebra $U_q\slhmn$.  
\end{abstract}

\section{Introduction}
Affine Lie algebras and superalgebras are among the most important algebraic structures with ample applications in modern sciences. While representations of affine Lie algebras were extensively studied and essentially understood, the
representation theory of affine Lie superalgebras is largely undeveloped.
The paper is a part of our program of studying the induced representations for affine Lie superalgebras 
and their quantizations. This study was initiated in \cite{CF} where the theory of Verma type modules for 
affine Lie superalgebras was developed. In the current paper, we consider parabolic induction in the case when the Levi subalgebra is an abelian extension of the Heisenberg subalgebra over the field of fractions $\C(q)$ and the inducing modules are 
diagonal modules. These modules have infinite dimensional weight subspaces and nonzero central charge. They also admit a natural eigenbasis for a certain infinite family of operators. 
Our first result establishes the irreducibility criterion of generalized imaginary Verma modules $M(\lambda,V)=\Ind({\cP},{\Ugh};V)$ for affine Lie superalgebra $\gh$
 associated with a parabolic subalgebra $\cP$, whose Levi factor is a sum of the  Heisenberg subalgebra and a Cartan subalgebra, an irreducible diagonal module $V$ over the  Heisenberg subalgebra and a weight $\lambda$.

\begin{thm}\label{thm-1}
For any irreducible diagonal module $V$ over the  Heisenberg subalgebra with nonzero central charge $a$ and any weight $\lambda$ such that $\lambda(c)=a$, the generalized imaginary Verma module $M(\lambda,V)$ is irreducible.
\end{thm}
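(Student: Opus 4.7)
The plan is the standard reduction strategy for imaginary-Verma-type modules: show that every nonzero $\gh$-submodule $W \subseteq M(\lambda,V)$ meets the canonical copy $1 \otimes V$ of the inducing module, and then invoke irreducibility of $V$ over the Heisenberg subalgebra $\fH$ together with the fact that $1 \otimes V$ generates $M(\lambda,V)$ cyclically. Concretely, I would use the triangular decomposition $\gh = \n^- \oplus \cL \oplus \n^+$, where $\cL = \h \oplus \fH$ is the Levi of $\cP$ and $\n^\pm$ are spanned by the real root vectors $e_{\pm\alpha} \otimes t^n$ (with $\alpha$ a positive finite root and $n \in \Z$), to obtain the PBW identification $M(\lambda,V) \cong U(\n^-) \otimes V$ as a vector space.

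For the main step, fix a nonzero $w = \sum_i u_i \otimes v_i \in W$ with distinct PBW monomials $u_i \in U(\n^-)$ and nonzero $v_i \in V$, order the $u_i$ by total length with a lexicographic refinement on the root vectors that appear, and let $u_1$ be the leading term. I would then strip the factors of $u_1$ off one at a time by acting with matching elements of $\n^+$: the commutator
\[
[\,e_\alpha \otimes t^{-n},\; e_{-\alpha} \otimes t^n\,] \;=\; h_\alpha - n\, c
\]
(with an appropriate super analogue for odd $\alpha$) shows that each such reduction, after straightening back into PBW order, modifies the leading coefficient $v_1$ by an element of $\cL$ and produces only strictly lower-order corrections that can be controlled by downward induction. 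The hypothesis $\lambda(c) = a \neq 0$ enters here through the central scalar $-na$ in each step: by choosing the parameters $n$ appropriately one accumulates a nonzero element of $U(\cL)$ acting on $v_1$. Iterating until $u_1$ has been fully stripped yields a nonzero element of $W \cap (1 \otimes V)$.

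The main obstacle is guaranteeing that this accumulated $U(\cL)$-operator does not annihilate $v_1$ along the way. The diagonal hypothesis on $V$ is precisely what makes this possible: $v_1$ may be taken to be a joint eigenvector for the diagonal abelian part of $\fH$, and the parameters $n$ at each stage can be chosen so that the scalars $\lambda(h_\alpha) - na$ (and the Heisenberg eigenvalues produced by the non-matching commutators that appear along the way) remain nonzero, which is feasible precisely because $a \neq 0$. In the super setting one must additionally track parity signs in the super-Jacobi identities and treat isotropic odd root vectors (which square to zero) carefully in the PBW basis, but these are bookkeeping matters rather than new structural obstacles. Once $W \cap (1 \otimes V) \neq 0$ is secured, irreducibility of $V$ over $\fH$ upgrades this to $1 \otimes V \subseteq W$, and then acting by $U(\n^-)$ gives $W = M(\lambda,V)$, as required.
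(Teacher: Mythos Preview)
Your overall architecture---reduce height by applying a root vector from $\cU^+$ and then induct---is exactly the one the paper uses, but the mechanism you propose for the reduction step has a genuine gap. You want to strip a factor $e_{-\alpha}\otimes t^{n}$ from the leading monomial by applying the \emph{matching} element $e_{\alpha}\otimes t^{-n}$, so that the commutator lands in $\h\oplus\C c$ and acts as the scalar $\lambda(h_\alpha)-n(\,e_\alpha\,|\,e_{-\alpha}\,)a$. The problem is that $n$ is not a parameter you get to choose: it is the $t$-degree already sitting in the leading monomial. For a single factor $u_1=e_{-\alpha}\otimes t^{n}$ there is no freedom at all, and nothing prevents $\lambda(h_\alpha)=n(\,e_\alpha\,|\,e_{-\alpha}\,)a$; in particular, for an isotropic odd $\alpha$ with $\lambda(h_\alpha)=0$ and $n=0$ the scalar vanishes outright. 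The diagonal hypothesis on $V$ says nothing about these Cartan scalars, so invoking it here does not close the gap.

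The paper's Lemma~\ref{lem.ht2} repairs this by deliberately \emph{not} matching $t$-degrees: one applies $x^+_{\alpha_0,\ell}$ with $\ell+r_0\neq 0$, so the commutator with the rightmost factor $x^-_{\alpha_0,r_0}$ produces the Heisenberg element $h_{\alpha_0,\ell+r_0}\in\h\otimes t^{\ell+r_0}$ rather than a scalar. Now $\ell$ really is a free parameter, and one chooses $|\ell|$ large enough that no $\phi_{i,\pm(\ell+r_0)}$ already occurs as a factor in any of the finitely many $v_j$ appearing in $v$. This is precisely where the diagonal structure of $V$ is used: Lemma~\ref{lem:H_{j,+-k}v_{mu, a}.not.zero} guarantees that at least one of $h_{\alpha_0,\pm(\ell+r_0)}$ does not kill $v_{j_0}$, and the ``new factor'' property guarantees the resulting basis vector cannot be cancelled by contributions from other terms. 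So the diagonal hypothesis enters through the Heisenberg action, not through $\h\oplus\C c$; once you shift the $t$-degree so that the commutator is a genuine loop Cartan element, your outline goes through along the lines of the paper.
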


This theorem is an analog of the corresponding  result in \cite{BBFK} in the case of affine Lie algebras (see also \cite{EF}, \cite{FK1}, \cite{FK2}, \cite{FKS}). Furthermore, differently from the other cases in the literature, we point out that we work with a specific order on the monomials of $U(\gh)$. Such order is crucial in the study of quantum deformations of generalized imaginary Verma modules. Notice also that the category of diagonal modules over the  Heisenberg subalgebra with a fixed nonzero central charge is equivalent to the category of weight modules for some Weyl algebra of infinite rank, whose irreducible objects were classified in \cite{FGM14}. Thus Theorem \ref{thm-1} gives a way to construct a large family of irreducible modules for affine Lie superalgebras based on these Weyl algebra representations. Namely, it defines a family of functors (parametrized by $\lambda$)  from the category of weight modules over certain Weyl algebra of infinite rank to the category of modules over an  affine Lie superalgebra each of which preserves irreducibility as long as $\lambda(c)\neq 0$.

For a distinguished parabolic subalgebra $\cP_{\Sigma}$ of $\gh$, the corresponding induced $\gh$-module $M_{\Sigma}(\lambda, V)$ is isomorphic to a Kac module 
 $K(M_0(\lambda,V))$, where $\cP_0$ is the intersection of $\cP_{\Sigma}$  with the even part $\gh_0$ of $\gh$ and
  $M_0(\lambda,V)=\Ind({\cP_0},U(\gh_0);V)$ (see Theorem \ref{thm-Kac}). Hence, in particular,  Theorem \ref{thm-1} gives the irreducibility of the Kac module $K(M_0(\lambda,V))$.

Next, we pass to quantum affine superalgebras of type $A$. The restriction of the type is due to the fact that the PBW bases are only known for quantum algebras in type $A$ \cite{Tsy21}, which we heavily depend on.
We introduce the quantum analogs $M_q(\lambda, V_q):=\Ind({\cP_q}, {\Uqgh}; V_q)$
 of generalized imaginary Verma modules, where $V_q$ is an irreducible module over the quantum Heisenberg $\cH_q$.  
 We show that these modules are true quantum deformations of the corresponding classical modules using  the technique of $\mathbb A$-forms \cite{Lu}.  In the non-super case, this approach has been successfully used in \cite{FGM}, where it was based on Beck's basis, and in \cite{FHW}. 
 
 Our second result is the following theorem which gives the irreducibility criterion for the quantum generalized imaginary Verma modules induced from generic $V_q$ (cf. Corollary \ref{cor-irred}).
 
 \begin{thm}\label{thm-2} Let $V_q$ be limit faithful (see Definition \ref{def-limit-faithful}), and let $\lambda$ be an integral weight with $\lambda(c)\neq 0$. Then  the quantum generalized imaginary Verma $U_q\slhmn$-module $M_q(\lambda, V_q)$ is irreducible if and only if $V_q$ is irreducible.
\end{thm}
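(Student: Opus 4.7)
The plan is to establish the two implications of the equivalence separately, with the sufficiency (irreducibility of $V_q$ implies irreducibility of $M_q(\lambda, V_q)$) being the substantive direction.

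The forward implication is straightforward. If $V_q$ admits a proper nonzero $\mathcal{H}_q$-submodule $W_q$, then, by the PBW theorem for $U_q\slhmn$ of \cite{Tsy21}, the parabolic induction gives $M_q(\lambda, V_q) \cong U_q(\mathcal{N}^-_q) \otimes V_q$ as a vector space with respect to a fixed monomial order, so $\Ind(\cP_q, \Uqgh; W_q)$ embeds as a proper nonzero submodule of $M_q(\lambda, V_q)$.

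For the converse, suppose $V_q$ is irreducible and limit faithful, but $M_q(\lambda, V_q)$ contains a nonzero proper $U_q\slhmn$-submodule $N$. The strategy is to transport this hypothetical submodule down to the classical setting via an $\mathbb{A}$-form and contradict Theorem \ref{thm-1}. Concretely, using the Lusztig-type integral form of $U_q\slhmn$ over $\mathbb{A} = \C[q, q^{-1}]_{(q-1)}$, one constructs an $\mathbb{A}$-lattice $M_q^{\mathbb{A}}(\lambda, V_q^{\mathbb{A}})$ inside $M_q(\lambda, V_q)$; the limit faithfulness hypothesis on $V_q$ is precisely what provides an $\mathcal{H}_q^{\mathbb{A}}$-stable lattice $V_q^{\mathbb{A}}$ whose $q \to 1$ specialization $V := \C \otimes_{\mathbb{A}} V_q^{\mathbb{A}}$ is an irreducible diagonal module over the classical Heisenberg subalgebra $\cH$ with the same nonzero central charge $a = \lambda(c)$. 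One then shows that the specialization of $M_q^{\mathbb{A}}(\lambda, V_q^{\mathbb{A}})$ is canonically isomorphic, as a $\Ugh$-module, to the classical generalized imaginary Verma module $M(\lambda, V)$, exploiting the compatibility of the fixed monomial order used to define the PBW decomposition with the integral form.

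With the $\mathbb{A}$-form in place, we set $N^{\mathbb{A}} := N \cap M_q^{\mathbb{A}}(\lambda, V_q^{\mathbb{A}})$, a $\Uge$-stable $\mathbb{A}$-submodule, and consider its classical specialization $\bar{N} \subseteq M(\lambda, V)$. The key verifications are: $\bar{N} \neq 0$, which follows by rescaling: any $v \in N$ can, after multiplication by a suitable power of $(q-1)$, be placed in $M_q^{\mathbb{A}}$ with nonzero image at $q=1$; and $\bar{N} \neq M(\lambda, V)$, which one reads off by choosing a homogeneous element of $M_q^{\mathbb{A}}$ outside $N^{\mathbb{A}}$ (using the weight decomposition with respect to the Cartan and the grading given by $\mathcal{N}^-$-monomial degree) and checking that its image persists in the quotient. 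Since $\lambda(c) = a \neq 0$ and $V$ is irreducible diagonal, Theorem \ref{thm-1} forces $M(\lambda, V)$ to be irreducible, contradicting the existence of $\bar{N}$. Hence $N$ cannot exist, and $M_q(\lambda, V_q)$ is irreducible.

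The principal technical obstacle is the second nonvanishing check, namely showing that $\bar{N}$ is a \emph{proper} submodule of $M(\lambda, V)$. This requires a careful leading-term analysis in the PBW basis adapted to the chosen monomial order, so that passing to the $q=1$ limit neither collapses $N$ to zero nor inflates it to the whole module; the limit faithfulness of $V_q$ is exactly the hypothesis tailored to making these specialization arguments succeed uniformly across all weight components.
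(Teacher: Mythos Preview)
Your approach differs from the paper's and contains a genuine gap. The paper does not specialize the whole submodule $N$ and then invoke Theorem~\ref{thm-1}; instead it specializes \emph{individual vectors}. Given a nonzero weight vector $v\in M_q(\lambda,V_q)$ with $\htt{v}>0$, one rescales so that $v\in M_\A(\lambda,V_\A)$ with $\bar v\neq0$ in the classical limit $M(\lambda,V)$, applies the classical height-lowering Lemma~\ref{lem.ht2} to produce some $x_{\alpha,k}$ with $x_{\alpha,k}\bar v\neq0$, and lifts to conclude $X_{\alpha,k}v\neq0$ (Lemma~\ref{lem.ht.quantum}). Induction on height then shows any nonzero submodule meets $1\otimes V_q$ (Theorem~\ref{thm-irred}), and irreducibility of $V_q$ (not of its limit) finishes. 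The point is that Lemma~\ref{lem.ht2} never requires $V$ to be irreducible.

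The gap in your argument is the assertion that limit faithfulness makes the specialization $V=\C\otimes_\A V_q^\A$ an \emph{irreducible} diagonal $\cH$-module. This is false in general: limit faithfulness only says that no $\mu_{i,k}$ has a pole at $q=1$, ensuring $V\neq0$. It can easily happen that $\mu_{i,k}\notin[ka]\Z$ (so $V_q=V_q(\bs\mu,a,\mathrm{F}_{\bs\mu,a})$ is irreducible) while $\mu_{i,k}(1)\in a\Z$ (so $V=V(\bs\mu,a,\mathrm{F}_{\bs\mu,a})$ is reducible); this is exactly the phenomenon flagged in Remark~\ref{rem:V_q.to_V}(i). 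Hence you cannot conclude that $M(\lambda,V)$ is irreducible, and your intended contradiction dissolves. Your scheme can be repaired: since $V_q$ is irreducible and $N$ is proper, $N\cap(1\otimes V_q)=0$, whence by the weight decomposition $\bar N\cap(1\otimes V)=0$; now the stronger first assertion of Theorem~\ref{thm:main_1} (every nonzero submodule meets $1\otimes V$, with no irreducibility hypothesis on $V$) forces $\bar N=0$, contradicting the rescaling argument you gave for $\bar N\neq0$. But this is not what you wrote, and the paper's element-wise route sidesteps the issue entirely.
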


Since irreducible modules over the quantum Heisenberg subalgebra correspond to  irreducible modules over the classical   Heisenberg subalgebra (see Remark \ref{rem-Heis}), Theorem \ref{thm-2} allows us to construct a family of irreducible modules for quantum affine Lie superalgebras of type $A$ from weight irreducible modules over certain  Weyl algebra of infinite rank. 

The developed technique can be extended to more general parabolic subalgebras containing the Heisenberg subalgebra. We are going to address this in a forthcoming paper.

\section{Preliminaries}

\subsection*{Notation:} Throughout the text $q$ will denote an indeterminate and $\C(q)$ is the field of fractions of the polynomial ring $\C[q]$. We will use the subscript $q$ to indicate that a vector space is defined over $\C(q)$. The elements of an algebra over $\C(q)$ (respectively, $\C$) will be represented by upper case (respectively, lower case) letters. If $X, Y$ are sets, we denote by $X^Y$ the set of all functions $f:Y\to X$. 

\subsection{Affine Kac-Moody Lie superalgebras}
Let $\g=\g_0\oplus \g_1$ denote a basic classical Lie superalgebra of type $\gl(m,n)$, $\sl(m,n)$, $\fpsl(n,n)$, $\osp(m,n)$, $F(4)$, $G(2)$ or $D(2,1; a)$ for $a\neq 0,-1$. Let $\h\subset \g_0$ be a fixed Cartan subalgebra of $\g$ and consider the root space decomposition of $\g$
    \[
\g = \h\oplus \bigoplus_{\alpha\in \D}\g_\alpha,
    \]
where $\g_\alpha = \{x\in \g\mid [h,x]=\alpha(h)x\ \forall h\in \h\}$ and $\D=\{\alpha\in \h^*\setminus\{0\}\mid \g_\alpha\neq 0\}$. The set $\D$ is the root system of $\g$, and $\g_\alpha$ is the root space associated to the root $\alpha$. Recall that every root is either purely even or purely odd, meaning that, for every $\alpha\in \D$, we have $\g_\alpha\subset \g_0$ or $\g_\alpha\subset \g_1$, respectively. In particular, $\D=\D_0\cup\D_1$, where $\D_i=\{\alpha\in \D\mid \g_\alpha\subset \g_i\}$ for $i\in \Z_2$. It is known that $\g$ is endowed with an even supersymmetric invariant bilinear form $(\cdot |\cdot)$, and that such a form is nondegenerate if $\g $ is not isomorphic to $\sl(n,n)$.

Consider a set of simple roots $\Sigma = \{\alpha_i\mid i\in I\}\subset \D$ and the triangular decomposition $\D = \D^-\cup \D^+$ corresponding to $\Sigma$. Let $x^\pm_{\alpha}\in \g_{\pm \alpha}$ and $h_\alpha$ for $\alpha\in \D^+$ denote the Chevalley type generators of $\g$ associated to $\Sigma$ (in particular, $[x_\alpha^+,x_\alpha^-]=h_\alpha$ for all $\alpha\in \D^+$, and, if $\alpha$ is even, then $x_\alpha^\pm, h_\alpha$ is an $\sl_2$-triple). In what follows we set $x_i^\pm:=x^\pm_{\alpha_i}$ and $h_i:=h_{\alpha_i}$ for all $i\in I$. The triangular decomposition $\D = \D^-\cup\D^+$ induces a triangular decomposition $\g = \n^-\oplus \h\oplus \n^+$, where $\n^\pm$ is the subalgebra of $\g$ generated by $x_i^\pm$ for $i\in I$. We point out that if $\g$ is isomorphic to $\fpsl(n,n)$, then the set $\{h_i\mid i\in I\}$ is not linearly independent; in fact, in this case any $h_i$ can be expressed as a linear combination of the remainder elements. Finally, let $A = (A_{i,j})$ be the Cartan matrix associated to $\Sigma$.

Let $\gh=\g\otimes \C[t,t^{-1}]\oplus \C c\oplus \C d$ be the affine Kac-Moody Lie superalgebra corresponding to $\g$. An element $z\otimes t^m\in \g\otimes \C[t,t^{-1}]$ will be denoted by $z_m$ (in particular, $h_{i,m}=h_i\otimes t^m$, $x_{i,m}^\pm=x_i^\pm\otimes t^m$ and $x_{\alpha,m}^\pm=x_\alpha^\pm\otimes t^m$). Moreover, for all $x,y,\in \g$ and $m,n\in\Z$ we have
    \[
[x_m,y_n] = [x,y]_{m+n} + m\delta_{m,-n}(x|y)c,\quad [c,\gh]=0,\quad [d,x_m]=mx_m.
    \]
The Cartan subalgebra of $\gh$ is $\hh =\h\oplus \C c\oplus \C d$, and the root space decomposition of $\gh$ is given by
    \[
\gh = \hh\oplus \bigoplus_{\alpha\in \hD} \gh_\alpha
    \]
where $\hD = \{\alpha+n\delta\mid \alpha\in \D, n\in \Z\}\subset \hh^*$ with $\delta\in \hh^*$ being defined so that $\delta(\h+ \C c)=0$ and $\delta(d)=1$. The sets $\hD^{re} = \hD\setminus \{n\delta\mid n\in \Z\}$ and $\hD^{im}=\{n\delta\mid n\in \Z\}$ are called the sets of real and imaginary roots of $\gh$, respectively. It is easy to see that, for all $\alpha\in \D^+$ and $n\in \Z\setminus \{0\}$, we have $\gh_{\pm\alpha+n\delta}=\C x_{\alpha,n}^\pm$ and $\gh_{n\delta}=\h\otimes t^n$. Recall also that the bilinear form $(\cdot | \cdot)$ of $\g$ can be extended to an even supersymmetric invariant  bilinear form on $\gh$, which is nondegenerate if $\g$ is not isomorphic to $\sl(n,n)$; namely we have
    \[
(x_m|y_n)=\delta_{m,-n}(x|y),\quad (\g\otimes\C[t,t^{-1}]\,|\, \C c \oplus \C d)=0,\quad (c|c)=(d|d)=0,\quad (c|d)=1.
    \]
    
If $\g$ is not isomorphic to $\sl(n,n)$, then the subalgebra
    \[
\cH' = \C c\oplus \bigoplus_{n\in \Z\setminus\{0\}} \gh_{n\delta}\subset \gh
    \]
is isomorphic to an infinite-dimensional $\Z$-graded Heisenberg Lie algebra. If $\g$ is isomorphic to $\sl(n,n)$, then $\h$ contains the identity matrix $z := I_{2n,2n}$ of $\g$, and the subalgebra $\cH$ is isomorphic to a direct sum of an infinite-dimensional $\Z$-graded Heisenberg Lie algebra and a commutative Lie algebra generated by the vectors $z_r$, $r\in \Zz$.

In what follows we will consider the algebra
    \[
\cH = \cH'\oplus \C d.
    \]

\subsection{Quantum Heisenberg algebra}
Consider the $\C(q)$-vector space $\C(q)\otimes_\C \cH$, set $H:=1\otimes h$ for any $h\in \h$, $H_{i,r}:=1\otimes h_{i,r}$ for all $i\in I$, and $q^c:= 1\otimes c$. Let ${\widetilde \cH}_q$ be the free associative algebra over $\C(q)$ associated to the vector space $\C(q)\otimes_\C \cH$. The quantum Heisenberg algebra $\cH_q'$ is defined to be ${\widetilde \cH}_q$ modulo the following relations:
    \[
q^{\pm c} \text{ is central},\quad [H_{i,r},H_{j,s}]=\delta_{r,-s}\,\frac{[rA_{i,j}][rc]}{r}
    \]
where $[k] = \frac{q^{k}-q^{-k}}{q - q^{-1}}$ and $[kc] = \frac{q^{kc}-q^{-kc}}{q - q^{-1}}$ for all $k\in \Z$. 

\begin{rem}\label{rem-Heis}
Note that $\cH_q$ is not a true quantization of the algebra $\cH$. More precisely, for any $a\in \Z$ we have an isomorphism of algebras $\cH_q/\langle q^c-q^a \rangle \cong \C(q)\otimes_\C\cH/\langle c-a \rangle$.
\end{rem}

In what follows we consider the algebra 
    \[
\cH_q = \cH_q'\oplus \C q^d\quad \text{where}\quad  q^d H_{i,r}q^{-d}=q^r H_{i,r} \text{ for all }i\in I,\ r\in \Z^\times.
    \]

\subsection{Diagonal modules over quantum Heisenberg algebras}\label{subsec:diagonal_modules}

Assume that $\g$ is not isomorphic to $\fpsl(n,n)$ or $\sl(n,n)$. In these cases $\det A \neq 0$. For each $r\in \Zz$, define the matrix $A(r):=([rA_{i,j}])_{i,j\in I}$. Since $\lim_{q\to 1} [rA_{i,j}]=rA_{i,j}$, we have $\det A(r) \neq 0$ for all $r\in \Zz$. Setting $\phi_{i,r}=H_{i,r}$ for $r>0,\ i\in I$ and using the fact that $\det A\neq 0$, we can find $\phi_{i,r}$ for $r<0$ such that $\{\phi_{i,k}\mid (i,k)\in I\times \Zz\}$ generates $\cH_q$ and
\begin{equation}\label{eq:defining.Heis}
[\phi_{i,r},\phi_{j,s}]=\delta_{i,j}\delta_{r,-s}[rc],\quad \text{for all }\; i, j\in I,\; r, s \in \Zz.
\end{equation}

Since each $\phi_{i,r}$ is a linear combination of the vectors $H_{i,r}$,  $i\in I$, we see that $q^d\phi_{i,r}q^{-d}=q^r\phi_{i,r}$ for all $i\in I, r\in \Zz$. 

We say a $\cH_q$-module $V_q$ is graded if $V_q=\oplus_{r\in \Z}V_{q,r}$, where $V_{q,r}$ is the eigenspace of $q^d$ associated with the eigenvalue $q^r$. A graded $\cH_q$-module $V_q$ is called \emph{diagonal} if $\phi_{i,k}\phi_{i,-k}$ have a common eigenvector (living in $\C(q)$) in $V_q$ for all $(i,k)\in I\times \Zg$. If $V_q$ is a diagonal irreducible $\cH_q$-module, then the elements $\phi_{i,k}\phi_{i,-k}$ are simultaneously diagonalizable on $V_q$ for all $(i,k)\in I\times \Zg$. If $v$ is such an eigenvector with $\phi_{i,k}\phi_{i,-k}v=\lambda_{ik}v$, then observe that 
$$\phi_{i,k}\phi_{i,-k}\phi_{i,\pm k}^p v=(\lambda_{ik} \mp p[kc])\phi_{i,\pm k}^p v, \,\,\,\phi_{i,k}\phi_{i,-k}\phi_{i, r}^p v=\lambda_{ik}\phi_{i, r}^p v.$$ In what follows, we let $\mathscr{K}_{q,a}$ be the category of all diagonal graded $\cH_q$-modules such that $q^c$ acts as $q^a$ with $a\in \Z^\times$.

Fix $a\in \Z^\times$ and $\bs \mu\in \C(q)^{I\times \Z_{>0}}$ such that $\bs\mu(i,k)=\mu_{i,k}$. Define the $\cH_q$-module 
    \[
V_q(\bs \mu,a)=\cH_q/\mathscr{I}_{\bs \mu,a},
    \]
where $\mathscr{I}_{\bs \mu,a}$ is the left ideal of $\cH_q$ generated by $$\{\phi_{i,k}\phi_{i,-k}-\mu_{i,k},\ q^c-q^a,\ q^d\mid (i,k)\in I\times \Zg\}.$$ Let $v_{\bs \mu, a}$ denote the image of $1$ in $V_q(\bs \mu,a)$. In order to construct a basis for $V_q(\bs \mu,a)$, we consider the set $\Phi$ of all functions $\varphi: I\times \Zg \rightarrow \{\pm 1\}$, and the set $\Pi$ of all functions $\pi: I\times \Zg \rightarrow \Zge$ with finite support. Given $(\varphi, \pi)\in \Phi\times \Pi$, define 
\begin{align*}
    \phi_{(\varphi, \pi)}=\prod_{(i,k)\in I\times \Zg} \phi_{i,\varphi(i,k)k}^{\pi(i,k)}\in \cH_q.
\end{align*}
Notice that all factors in $\phi_{(\varphi, \pi)}$ commute among themselves, so the product can be taken in any order.
\begin{lem}\label{lem:strucrure_V_q(mu,a)}
The following statements hold:
\begin{enumerate}
\item The set $\{\phi_{(\varphi, \pi)}v_{\bs \mu, a}\mid (\varphi, \pi)\in \Phi\times \Pi\}$ is a basis of $V_q(\bs \mu,a)$
\item The module $V_q(\bs \mu,a)$ is irreducible if and only if $\mu_{i,k} \notin [ka]\Z$ for all $(i,k)\in I\times \Zg$.
\end{enumerate}
\end{lem}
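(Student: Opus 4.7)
The plan is to realize $V_q(\bs\mu,a)$ explicitly as a Fock-type module and then read both parts off that realization. Let $W$ be the $\C(q)$-vector space with basis $\{e_{(\varphi,\pi)}\mid (\varphi,\pi)\in\Phi\times\Pi\}$, where by convention $\varphi(i,k)$ is set to $+1$ whenever $\pi(i,k)=0$ so the indexing is non-redundant. I would equip $W$ with an $\cH_q$-action by declaring $q^c$ to act as $q^a$, $q^d$ as $q^{\sum_{(i,k)}\varphi(i,k)k\pi(i,k)}$, and, for $k\in\Zg$ and $\varepsilon\in\{\pm 1\}$, the operator $\phi_{i,\varepsilon k}$ either to raise $\pi(i,k)$ by one---when $\varphi(i,k)=\varepsilon$, or when $\pi(i,k)=0$ in which case one also resets $\varphi(i,k):=\varepsilon$---or to lower it by one otherwise, with the lowering coefficient prescribed by the two single-variable identities $\phi_{i,-k}\phi_{i,k}^pv_{\bs\mu,a}=(\mu_{i,k}-p[ka])\phi_{i,k}^{p-1}v_{\bs\mu,a}$ and $\phi_{i,k}\phi_{i,-k}^pv_{\bs\mu,a}=(\mu_{i,k}+(p-1)[ka])\phi_{i,-k}^{p-1}v_{\bs\mu,a}$, which follow from the Heisenberg commutators and the defining relation. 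Generators attached to different pairs $(j,\pm s)\ne(i,\pm k)$ act in their own independent slot.

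For part (1), the first step is to check that this assignment satisfies the defining relations of $\cH_q$; the only non-trivial one is the Heisenberg bracket $[\phi_{i,k},\phi_{i,-k}]=[kc]$, and it reduces to a short case analysis on the three kinds of basis vectors, namely $\pi(i,k)=0$, or $\pi(i,k)>0$ with $\varphi(i,k)=+1$, or $\pi(i,k)>0$ with $\varphi(i,k)=-1$. Since $\phi_{i,k}\phi_{i,-k}e_0=\mu_{i,k}e_0$, $q^ce_0=q^ae_0$, and $q^de_0=e_0$, the universal property of the induced module produces a surjection $\alpha\colon V_q(\bs\mu,a)\twoheadrightarrow W$ sending $\phi_{(\varphi,\pi)}v_{\bs\mu,a}$ to $e_{(\varphi,\pi)}$. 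Conversely, the Heisenberg commutators combined with $\phi_{i,k}\phi_{i,-k}v_{\bs\mu,a}=\mu_{i,k}v_{\bs\mu,a}$ let one rewrite any monomial in the $\phi_{i,r}$ applied to $v_{\bs\mu,a}$ as a $\C(q)$-linear combination of the $\phi_{(\varphi,\pi)}v_{\bs\mu,a}$ one $(i,\pm k)$-pair at a time, because generators attached to different pairs commute. Spanning on the $V_q(\bs\mu,a)$-side together with the injectivity of $\alpha$ on the spanning set yields the basis statement.

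For part (2), I would exploit the fact that on the basis from (1) each $\phi_{i,k}\phi_{i,-k}$ acts diagonally with eigenvalue $\mu_{i,k}-\varphi(i,k)\pi(i,k)[ka]$. When $\mu_{i,k}\notin[ka]\Z$ for every $(i,k)$, the integer $\varphi(i,k)\pi(i,k)\in\Z$ is recovered uniquely from this eigenvalue, so the joint spectrum of the commuting family $\{\phi_{i,k}\phi_{i,-k}\}$ separates basis vectors. Consequently any non-zero submodule, being stable under this family, contains some basis vector $e_{(\varphi_0,\pi_0)}$ by a standard Lagrange-interpolation argument applied to the finitely many eigenvalue components that appear in a chosen non-zero element; iterated application of the appropriate lowering generator $\phi_{i,-\varphi_0(i,k)k}$ produces a non-zero scalar multiple of $v_{\bs\mu,a}$, because under the hypothesis none of the lowering coefficients $\mu_{i,k}-p[ka]$ or $\mu_{i,k}+(p-1)[ka]$ ever vanishes. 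Therefore any non-zero submodule is the whole module.

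Conversely, suppose $\mu_{i_0,k_0}=n_0[k_0a]$ for some $(i_0,k_0)\in I\times\Zg$ and $n_0\in\Z$. When $n_0\ge 1$ the single-variable formula forces $\phi_{i_0,-k_0}\cdot\phi_{i_0,k_0}^{n_0}v_{\bs\mu,a}=0$, and the submodule generated by the non-zero vector $\phi_{i_0,k_0}^{n_0}v_{\bs\mu,a}$ coincides, in the basis description, with the span of those $e_{(\varphi,\pi)}$ satisfying $\varphi(i_0,k_0)=+1$ and $\pi(i_0,k_0)\ge n_0$; it is proper because it does not contain $v_{\bs\mu,a}$. When $n_0\le 0$, a parallel argument with the submodule generated by $\phi_{i_0,-k_0}^{1-n_0}v_{\bs\mu,a}$ (which is annihilated by $\phi_{i_0,k_0}$) produces a proper non-zero submodule. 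The main technical hurdle in the whole proof is handling the $\pi(i,k)=0$ boundary case in the Fock prescription and checking the internal consistency of the raising and lowering rules there; once this bookkeeping is in place, everything else reduces to standard Heisenberg-algebra manipulations.
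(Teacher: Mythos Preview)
Your proof is correct. For part~(ii) you argue essentially as the paper does: compute the $\phi_{i,k}\phi_{i,-k}$-eigenvalue $\mu_{i,k}-\varphi(i,k)\pi(i,k)[ka]$ on each basis vector, use eigenvalue separation to extract a basis vector from any nonzero element of a submodule, and then lower to $v_{\bs\mu,a}$; the reducible direction is also identical, exhibiting $\phi_{i,k}^{n}v_{\bs\mu,a}$ or $\phi_{i,-k}^{1-n}v_{\bs\mu,a}$ as the generator of a proper submodule.

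For part~(i), however, your route genuinely differs. The paper's argument is direct: spanning holds because any monomial containing both $\phi_{i,k}$ and $\phi_{i,-k}$ lies in $\mathscr{I}_{\bs\mu,a}$, and linear independence holds because the joint eigenvalue tuple $(\mu_{i,k}-\varphi(i,k)\pi(i,k)[ka])_{(i,k)}$ determines $(\varphi,\pi)$ uniquely (since $[ka]\neq 0$). You instead construct an explicit Fock-type module $W$ and deduce linear independence from the surjection $V_q(\bs\mu,a)\twoheadrightarrow W$. Your approach has the virtue of making non-vanishing of each $\phi_{(\varphi,\pi)}v_{\bs\mu,a}$ transparent without any side appeal to the structure of $\cH_q/\langle q^c-q^a\rangle$, at the cost of the boundary bookkeeping you yourself flagged. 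The paper's eigenvalue argument is shorter, and since you already invoke exactly that mechanism in part~(ii), one could equally well use it to settle linear independence in (i) with no additional effort.
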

\begin{proof}
(i): To see that this set generates $V_q(\bs \mu,a)$ we just observe that any monomial that has $\phi_{i,k}$ and $\phi_{i,-k}$ as factors lies in $\mathscr{I}_{\bs \mu,a}$. The linear independence  follows from the fact that the $\phi_{i,k}\phi_{i,-k}$-eigenvalues of each $\phi_{(\varphi, \pi)}v_{\bs \mu, a}$ is uniquely determined by $(\varphi, \pi)$. 

(ii): Notice that 
    \[
\phi_{i,-k}\phi_{i,k}^n v_{\bs \mu, a}=(-n[kc]+\mu_{i,k})\phi_{i,k}^{n-1}v_{\bs \mu, a},\text{ and } \phi_{i,k}\phi_{i,-k}^n v_{\bs \mu, a}=((n-1)[kc]+\mu_{i,k})\phi_{i,-k}^{n-1}v_{\bs \mu, a}.
    \]
Hence, if $\mu_{i,k} = n[ka]$ for some $n\in \Z_{>0}$ (respectively, $n\in \Z_{\leq 0}$), then the vector $\phi_{i,k}^n v_{\bs \mu, a}$ (respectively, $\phi_{i,-k}^{-n+1} v_{\bs \mu, a}$) generates a proper submodule of $V_q(\bs \mu,a)$. The converse is clear.
\end{proof}

\medskip

It follows from Lemma~\ref{lem:strucrure_V_q(mu,a)} that the module $V_q(\bs \mu,a)$ is reducible if and only if $\mathrm{F}_{\bs \mu,a} = \{(i,k)\in I\times \Zg\mid \mu_{i,k}\in [ka]\Z\}\neq \emptyset$. For $\mathrm{F}\subseteq \mathrm{F}_{\bs \mu,a}$, we define $V_q(\bs \mu,a,\mathrm{F})$ be the quotient of $V_q(\bs \mu,a)$ by its submodule generated by $\{(\phi_{i,k})^{\frac{\mu_{i,k}}{[ka]}}v_{\bs\mu, a} \text{ if } \frac{\mu_{i,k}}{[ka]}>0,\ (\phi_{i,-k})^{1-\frac{\mu_{i,k}}{[ka]}}v_{\bs\mu, a} \text{ if }\frac{\mu_{i,k}}{[ka]}\leq 0 \mid  (i,k) \in \mathrm{F}\}$. We continue to denote the image of $v_{\bs \mu, a}$ in $V_q(\bs \mu,a,\mathrm{F})$ by $v_{\bs \mu, a}$. 

To construct a basis for $V_q(\bs \mu,a,\mathrm{F})$ we consider the sets
\begin{align*}
& E_1 = \{ (\phi_{i_1,k_1})^{j_1} \cdots (\phi_{i_l,k_l})^{j_l} v_{\bs\mu, a} \mid (i_j,k_j)\in F,\, 0\leq j_s < \frac{\mu_{i_s,k_s}}{[k_s a]}\}, \\
& E_2 = \{ (\phi_{i_1,-k_1})^{j_1} \cdots (\phi_{i_l,-k_l})^{j_l} v \mid v\in E_1,\ (i_j,k_j)\in F,\, 0\leq j_s \leq -\frac{\mu_{i_s,k_s}}{[k_s a]}\}.
\end{align*}
Next, we let $\Phi(\mathrm{F}) \subseteq \Phi$ denote the set of all functions $\varphi \in \Phi$ for which the following hold
\begin{align*}
    \varphi(i,k)=\begin{cases} -1, \quad \text{if } \frac{\mu_{i,k}}{[ka]}> 0;\\
     1, \quad \text{if } \frac{\mu_{i,k}}{[ka]}\leq 0,
    \end{cases}
\end{align*}
for all $(i,k) \in \mathrm{F}$. For all $v\in E_2$ we define $\Phi(v) = \{\varphi\in \Phi(F)\mid \phi_{i,-\varphi(i,k)k}\text{ is not a factor occurring in }v\}$, then
    \[
\{\phi_{(\varphi, \pi)}v \mid v\in E_2,\ (\varphi, \pi)\in \Phi(v)\times \Pi\}
    \]
is a basis of $V_q(\bs \mu,a,\mathrm{F})$.

\begin{lem}\label{lem:H_{j,+-k}v_{mu, a}.not.zero}
The vectors $H_{j,k}v_{\bs \mu, a}$ and $H_{j,-k}v_{\bs \mu, a}$ cannot vanish simultaneously for all $j\in I$.
\end{lem}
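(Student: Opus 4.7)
The plan is a short proof by contradiction, using the Heisenberg commutation relations \eqref{eq:defining.Heis} for the rescaled generators $\phi_{i,r}$ rather than the original generators $H_{i,r}$. Suppose the contrary: assume that $H_{j,k}v_{\bs\mu,a} = 0$ and $H_{j,-k}v_{\bs\mu,a} = 0$ for every $j \in I$ and for some fixed $k \in \Z_{>0}$.

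First I transfer the vanishing from the $H$-generators to the $\phi$-generators. Since $\phi_{j,k} = H_{j,k}$ for $k>0$ by definition, immediately $\phi_{j,k}v_{\bs\mu,a} = 0$ for every $j$. At grade $-k$, each $\phi_{j,-k}$ is by construction a $\C(q)$-linear combination of $\{H_{i,-k}\}_{i \in I}$ (the coefficients being the entries of the inverse of the matrix $A(k)$, which is invertible), so the hypothesis also forces $\phi_{j,-k}v_{\bs\mu,a} = 0$ for every $j$. Next, for any $j$ I compute
\begin{equation*}
[kc]\,v_{\bs\mu,a} \;=\; [\phi_{j,k},\phi_{j,-k}]v_{\bs\mu,a} \;=\; \phi_{j,k}\phi_{j,-k}v_{\bs\mu,a} - \phi_{j,-k}\phi_{j,k}v_{\bs\mu,a} \;=\; 0,
\end{equation*}
where the first equality is \eqref{eq:defining.Heis}. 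Because $q^c$ acts as $q^a$ on $v_{\bs\mu,a}$, the left-hand side equals $[ka]\,v_{\bs\mu,a}$, and $[ka] = (q^{ka}-q^{-ka})/(q-q^{-1})$ is nonzero in $\C(q)$ since $a \in \Z^\times$ and $k>0$. As $v_{\bs\mu,a}$ is a nonzero basis vector (in either $V_q(\bs\mu,a)$ or the quotient $V_q(\bs\mu,a,\mathrm{F})$), this is the required contradiction.

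The essential point is to argue with the $\phi_{j,\pm k}$ rather than with the $H_{j,\pm k}$ directly: the commutator $[\phi_{j,k},\phi_{j,-k}] = [kc]$ is always nontrivial, whereas the analogous relation for the $H$-generators carries a factor $[kA_{j,j}]$ that would vanish when $\alpha_j$ is an odd isotropic simple root. Once that change of generators is in place, the Heisenberg commutator produces the contradiction immediately, so I do not foresee any further technical obstacle.
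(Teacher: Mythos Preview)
Your argument is correct under the reading ``it cannot happen that $H_{j,\pm k}v_{\bs\mu,a}=0$ for every $j$ simultaneously'', but that is not what the lemma asserts. The paper's proof (and its later use in Lemma~\ref{lem.ht2}, where one must produce $H_{\alpha_0,\ell+r_0}v_{j_0}\neq 0$ for a single fixed root $\alpha_0$) makes clear that the intended statement is the per-index one: for each fixed $j\in I$, the two vectors $H_{j,k}v_{\bs\mu,a}$ and $H_{j,-k}v_{\bs\mu,a}$ are not both zero. The remark immediately after the lemma, that in the non-super case this follows from $A_{j,j}\neq 0$, confirms this reading. Your assumed negation (``for every $j$ both vanish'') is therefore too strong an hypothesis, and your proof only establishes the weaker existential conclusion.

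The place where your argument breaks down for a single fixed $j$ is the passage from $H_{j,-k}v_{\bs\mu,a}=0$ to $\phi_{j,-k}v_{\bs\mu,a}=0$. Since each $\phi_{i,-k}$ is a linear combination of \emph{all} the $H_{i',-k}$, you need vanishing of $H_{i',-k}v_{\bs\mu,a}$ for every $i'$ to deduce that of $\phi_{j,-k}v_{\bs\mu,a}$; knowing it only for the one index $j$ yields instead the single relation $\sum_i b_i\,\phi_{i,-k}v_{\bs\mu,a}=0$ with $b_i=[kA_{i,j}]/k$. The paper extracts information from this one relation by a different mechanism that your proposal does not invoke: the vectors $\phi_{i,-k}v_{\bs\mu,a}$ are simultaneous eigenvectors for the commuting family $\{\phi_{i',k'}\phi_{i',-k'}\}$ with pairwise distinct eigenvalue-tuples, so the nonzero ones among them are linearly independent, and hence $\phi_{i,-k}v_{\bs\mu,a}=0$ for the relevant indices. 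That eigenvector/linear-independence step is the missing ingredient; once you have it, the contradiction with $\phi_{j,\varphi(j,k)k}v_{\bs\mu,a}\neq 0$ follows as in the paper.
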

\begin{proof}
If $H_{j,k}v_{\bs \mu, a}=0$, then $\phi_{j,k}v_{\bs \mu, a}=0$. Moreover, if $H_{j,-k}v_{\bs \mu, a}=0$, then $\sum_{i\in I} b_{i,k}\phi_{i,-k}v_{\bs \mu, a}=0$. This  together with the linearly independence of the vectors $\phi_{i,-k}v_{\bs \mu, a}$ (they have different $\phi_{i,k}\phi_{i,-k}$-eigenvalues) imply that $\phi_{i,-k}v_{\bs \mu, a}=0$ for all $i\in I$. But then we have $\phi_{j,\pm k}v_{\bs \mu, a}=0$, contradicting the fact that $\phi_{i,\varphi(i,k)k}v_{\bs \mu, a}\neq 0$ for any $i\in I$, $k\in \Zz$.
\end{proof}

It is worth to notice that Lemma~\ref{lem:H_{j,+-k}v_{mu, a}.not.zero} in the non-super setting is a consequence of the fact that $A_{j,j}\neq 0$ for all $j\in I$. Such argument clearly does not work for isotropic roots.

Finally, we define $\Phi_{\bs \mu,a}=\Phi(\mathrm{F}_{\bs \mu,a})$, and we write $\varphi=\varphi_{\bs \mu, a}$ to indicate that a function $\varphi$ lies in $\Phi_{\bs \mu,a}$. Note that $V_q(\bs \mu, a, \mathrm{F}_{\bs \mu,a})$ is irreducible and that $V_q(\bs \mu, a,\emptyset) = V_q(\bs \mu,a)$. It is obvious that the modules of the form $V_q(\bs \mu,a,\mathrm{F})$ and $V_q(\bs \nu,a,\mathrm{F}')$ are not isomorphic if $\mathrm{F}_{\bs \mu,a}\neq \mathrm{F}_{\bs \nu,a}$ or $\mathrm{F}\neq \mathrm{F}'$. Moreover we have the following result:

\begin{prop}
Let $\mathrm{F}\subseteq \mathrm{F}_{\bs \mu,a}=\mathrm{F}_{\bs \nu,a}$. Then the modules $V_q(\bs \mu,a,\mathrm{F})$ and $V_q(\bs \nu,a,\mathrm{F})$ are isomorphic if and only if $\mu_{i,k} = \nu_{i,k}$ for all but finitely many indices, $\mu_{i,k}-\nu_{i,k}\in [ka]\Z$ for all $(i,k)\in I\times \Zg$, and $\varphi_{\bs \mu, a}(i,k)=\varphi_{\bs \nu, a}(i,k)$ for all $(i,k)\in \mathrm{F}_{\bs \mu,a}$.
\end{prop}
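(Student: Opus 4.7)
The strategy is to control where an isomorphism must send the cyclic generator $v_{\bs\mu,a}$, exploiting the joint eigenstructure of the commuting family $\{\phi_{i,k}\phi_{i,-k}\}_{(i,k)\in I\times\Zg}$ and the explicit basis description from Lemma~\ref{lem:strucrure_V_q(mu,a)} together with its $\mathrm{F}$-refinement given just after. For necessity, suppose $\psi\colon V_q(\bs\mu,a,\mathrm{F})\to V_q(\bs\nu,a,\mathrm{F})$ is an isomorphism. Since $\phi_{i,k}\phi_{i,-k}\,v_{\bs\mu,a}=\mu_{i,k}v_{\bs\mu,a}$ and $\psi$ is $\cH_q$-linear, $\psi(v_{\bs\mu,a})$ is a joint eigenvector of these operators in the target with eigenvalues $(\mu_{i,k})$. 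The identities $\phi_{i,k}\phi_{i,-k}\phi_{i,\pm k}^p v_{\bs\nu,a}=(\nu_{i,k}\mp p[ka])\phi_{i,\pm k}^p v_{\bs\nu,a}$ recalled before Lemma~\ref{lem:strucrure_V_q(mu,a)} show that each basis vector $\phi_{(\varphi,\pi)}v_{\bs\nu,a}$ has joint eigenvalue $(\nu_{i,k}-\varphi(i,k)\pi(i,k)[ka])_{(i,k)}$, which separates the basis. Hence $\psi(v_{\bs\mu,a})=c\,\phi_{(\varphi^*,\pi^*)}v_{\bs\nu,a}$ for a unique pair with $\varphi^*(i,k)\pi^*(i,k)=(\nu_{i,k}-\mu_{i,k})/[ka]$; finite support of $\pi^*$ gives the first condition and integrality gives the second. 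For the third, fix $(i,k)\in\mathrm{F}_{\bs\mu,a}$ and suppose toward contradiction $\varphi_{\bs\mu,a}(i,k)\ne\varphi_{\bs\nu,a}(i,k)$; a direct calculation using $[\phi_{i,k},\phi_{i,-k}]=[ka]$ shows that some power of $\phi_{i,-\varphi^*(i,k)k}$ applied to $\phi_{(\varphi^*,\pi^*)}v_{\bs\nu,a}$ produces zero, while the corresponding power applied to $v_{\bs\mu,a}$ is nonzero in the source (either because no annihilation relation exists at $(i,k)$ when $(i,k)\notin\mathrm{F}$, or because the power is strictly less than the source's cut-off), contradicting the injectivity of $\psi$.

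For sufficiency, assume the three conditions hold and set $n_{i,k}:=(\nu_{i,k}-\mu_{i,k})/[ka]\in\Z$, finitely supported by the first condition. Define
\[
w:=\Big(\prod_{n_{i,k}>0}\phi_{i,k}^{n_{i,k}}\Big)\Big(\prod_{n_{i,k}<0}\phi_{i,-k}^{-n_{i,k}}\Big)v_{\bs\nu,a}\in V_q(\bs\nu,a,\mathrm{F}).
\]
The third condition, together with the basis bounds described in Section~\ref{subsec:diagonal_modules}, ensures $w$ is one of the basis vectors, hence nonzero, and by construction $\phi_{i,k}\phi_{i,-k}\,w=\mu_{i,k}\,w$ for every $(i,k)$. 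A case-by-case verification on the sign of $n_{i,k}$ shows that $w$ also satisfies the annihilation relations defining $V_q(\bs\mu,a,\mathrm{F})$: each such relation either reduces, via the commutation identities above, to a relation $\phi_{i,k}^{\nu_{i,k}/[ka]}v_{\bs\nu,a}=0$ (or its $\phi_{i,-k}$-analogue) already holding in the target, or is killed by a scalar factor $(p-1)[ka]+\nu_{i,k}=0$ appearing in the iterated action. By the universal property of $V_q(\bs\mu,a,\mathrm{F})$, the assignment $v_{\bs\mu,a}\mapsto w$ extends to an $\cH_q$-module homomorphism $\psi$; the symmetric construction with the roles of $\bs\mu$ and $\bs\nu$ exchanged yields a map back, and the composition sends $v_{\bs\mu,a}$ to an explicit nonzero scalar multiple of itself, evaluated via the same eigenvalue formulas, so cyclicity of both modules as $\cH_q$-modules forces each to be an isomorphism.

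The principal technical challenge is the annihilation bookkeeping: for each $(i,k)\in\mathrm{F}_{\bs\mu,a}$ one must analyze all four combinations of signs of $\mu_{i,k}/[ka]$ and $\nu_{i,k}/[ka]$, and the third condition is precisely what matches the source's defining relations to already-holding ones in the target (and analogously for the \emph{free} indices in $\mathrm{F}_{\bs\mu,a}\setminus\mathrm{F}$, where one argues via injectivity instead of an explicit relation). One also needs $[ka]\ne 0$ in $\C(q)$ whenever $a\ne 0$, which is immediate, so that the divisions by $[ka]$ appearing throughout the argument are legitimate.
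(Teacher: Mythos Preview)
Your argument is correct, and in fact it is more complete than the paper's: the paper only proves the ``if'' direction, constructing the isomorphism when the three conditions hold, and does not address necessity at all. Your eigenvalue-separation argument for necessity (forcing $\psi(v_{\bs\mu,a})$ to be a scalar multiple of a single basis vector $\phi_{(\varphi^*,\pi^*)}v_{\bs\nu,a}$, then reading off the three conditions) is the natural one and works.

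For sufficiency, your approach differs from the paper's. You build the isomorphism in one shot by sending $v_{\bs\mu,a}$ to the monomial $w$ and checking that all defining relations of $V_q(\bs\mu,a,\mathrm{F})$ are satisfied by $w$, then constructing an inverse symmetrically. The paper instead factors the isomorphism as a finite composite of elementary maps $\xi_{j,\ell}\colon V_q(\bs\mu,a)\to V_q(\bs\kappa,a)$, each shifting a single $\mu_{j,\ell}$ by $[\ell a]$ via $v_{\bs\mu,a}\mapsto \phi_{j,\ell}v_{\bs\kappa,a}$; it observes that $\xi_{j,\ell}$ is an isomorphism precisely when $\mu_{j,\ell}\neq 0$ (equivalently, when the $\varphi$-signs at that index match), and that each $\xi_{j,\ell}$ carries the kernel of $V_q(\bs\mu,a)\twoheadrightarrow V_q(\bs\mu,a,\mathrm{F})$ onto the corresponding kernel on the $\bs\kappa$ side. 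The paper's factorization is cleaner because each elementary step has only one relation to check and the sign-matching condition becomes the single transparent requirement $\mu_{j,\ell}\neq 0$; your direct construction is equivalent but forces you into the four-way sign bookkeeping you flag at the end, which you leave as a ``case-by-case verification.'' Both approaches are valid; the paper's trades a small induction for avoiding that case analysis.
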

\begin{proof}Fix $(j,\ell) \in I\times \Zg$ and let $\bs \kappa=(\kappa_{i,k}) \in \C(q)^{I\times \Zg}$ be defined as $\kappa_{j, \ell}=\mu_{j,\ell}+[\ell a]$ and $\kappa_{i,k}=\mu_{i,k}$ if $ (i,k)\neq (j,\ell)$. Then, we have a homomorphism of $\cH_q$-modules $\xi_{j,\ell}:V_q(\bs \mu,a)\rightarrow V_q(\bs \kappa, a)$ which maps $v_{\bs \mu, a}$ to $\phi_{j,\ell} v_{\bs \kappa, a}$. Moreover, $\xi_{j,\ell}$ is an isomorphism if and only if $\mu_{j,\ell}\neq 0$ which is equivalent to ask that $\varphi_{\bs \mu, a}(i,k)=\varphi_{\bs \kappa, a}(i,k)$ for all $(i,k)\in \mathrm{F}_{\bs \mu,a}$. Observe that, in this case the isomorphism $\xi_{j,\ell}^{-1}$ maps $v_{\bs \kappa, a}$ to $\phi_{j,-\ell} v_{\bs \mu, a}$. Furthermore, the isomorphism $\xi_{j,\ell}$ maps the kernel of the canonical map $V_q(\bs \mu,a) \rightarrow V_q(\bs \mu,a,\mathrm{F})$ to the kernel of the canonical map $V_q(\bs \kappa,a) \rightarrow V_q(\bs \kappa,a,\mathrm{F})$ for any $\mathrm{F}\subseteq \mathrm{F}_{\bs \mu,a}$. Hence, it induces an isomorphism between $V_q(\bs \mu,a,\mathrm{F})$ and $V_q(\bs \kappa,a,\mathrm{F})$.

Finally, if $V_q(\bs \mu,a,\mathrm{F})$ and $V_q(\bs \nu,a,\mathrm{F})$ satisfy the hypothesis of the proposition, then the desirable isomorphism between such modules is given by composition of finitely many suitable powers of $\xi_{j,\ell}$, or its inverse, for all $(j,\ell)\in I\times \Zg$ such that $\mu_{j,\ell} \neq  \nu_{j,\ell}$.
\end{proof}

\subsection{Some remarks} Each $\varphi \in \Phi$ induces a triangular decomposition of $\cH_q$
\begin{align*}
    \cH_q=\cH_q^{-\varphi}\otimes \C(q)[q^{\pm c}] \otimes \cH_q^{\varphi},
\end{align*}
where $\cH_q^{\pm\varphi}$ is the subalgebra of $\cH_q$ generated by $\phi_{i,\pm\varphi(i,r)r}$, $(i,r)\in I\times \Zg$. The \emph{$\varphi$-Verma module} of level $a$ is the $\cH_q$-module $V^\varphi(a)$ generated by a nonzero vector $v_{\varphi,a}$ such that $q^c v_{\varphi,a}=q^a v_{\varphi,a}$ and $\phi_{i,\varphi(i,r)r}v_{\varphi,a}=0$ for all $(i,r)\in I\times \Zg$. As vector spaces, the module $V^\varphi(a)$ is isomorphic to $\cH_q^{-\varphi}$. We point out that when $\frac{\mu_{i,k}}{[ka]}\in \{0,1\}$ for all $(i,k)\in I\times \Zg$, then $\Phi_{\mu,a}=\{\psi\}$ and $V_q(\bs \mu, a,\mathrm{F}_{\bs \mu,a})$ is isomorphic to the \emph{${\varphi}$-Verma module} $V^\varphi(a)$ with $\varphi=-\psi$.

The following subalgebra of $\C(q)$ will be important in the subsequent sections:
    \[
\A = \{\frac{f(q)}{g(q)}\in \C(q)\mid g(1)\neq 0\}\subseteq \C(q).
    \]

\begin{rem}\label{rem:V_q.to_V}
\begin{enumerate}
\item The construction of classical analogs of modules of the form $V_q(\bs\mu, a, F)$ also works for the classical Heisenberg algebra $\cH$ for all $a\in \C^\times$. The only difference is that the eigenvalues $\mu_{i,k}$ should live in $\C$ instead of $\C(q)$. Any $\bs \mu\in \A^{I\times \Z_{>0}}$ induces $\bs \mu(1) \in \C^{I\times \Z_{>0}}$ where $\bs \mu(1) (i,k) = \bs \mu_{i,k}(1)$, and we denote the $\cH$-module $V(\bs\mu(1), a, \mathrm{F})$ also by $V(\bs\mu, a, \mathrm{F})$ (we just drop the $q$ from the notation). We denote by $L(\bs\mu, a)$ the irreducible quotient of $V(\bs\mu, a)$. We notice however that $V(\bs \mu, a,\Phi_{\bs \mu,a})$ is reducible in general (in fact, if some $\mu_{i,k}(1)\in a\Z$,  but $\mu_{i,k}\not\in [ka]\Z$, then $V(\bs \mu, a,\Phi_{\bs \mu,a})$ is not an irreducible $\cH$-module) and $L(\bs\mu, a)$ is its irreducible quotient. 

\item  The category $\mathscr{K}_a$ of diagonal $\cH$-modules of level $a$ was studied in \cite{BBFK, FGM14}. A non-trivial irreducible $\cH$-module in $\mathscr{K}_{a}$ has finite-dimensional graded subspaces if and only if it is isomorphic to a ${\varphi}$-Verma module with ${\varphi}$ constant \cite{BBFK}. Moreover, it was proved in \cite{FGM14} that any simple module in $\mathscr{K}_a$ is isomorphic to some $L(\bs \mu, a)$.
\end{enumerate}
\end{rem}

\begin{rem}\label{rem:H_{j,+-k}v_{mu, a}.not.zero}
\begin{enumerate}
\item If $\g=\fpsl(n,n)$, then the elements $h_i\in \h$ with $i\in I$ are no longer linearly independent. However, each $h_j$ is a linear combination of $h_i$ for $i\in I\setminus\{j\}$, and hence, the action of $H_{|\Sigma|, r}$ on any $\cH_q$-module is determined by the action of the other generators $H_{i,r}$ for $i\in I \setminus\{|\Sigma|\}$. In this case, we consider $I'= \{1,\ldots, |\Sigma|-1\}$ and we get a linearly independent set $\{h_i\mid i\in I'\}$ which is a basis of $\h$. Now all constructions of this subsection will follow by replacing $I$ by $I'$ everywhere and $A$ by the matrix $A'$ obtained by deleting the latter row and column of $A$. We also point out that Lemma~\ref{lem:H_{j,+-k}v_{mu, a}.not.zero} still holds for all $j\in I$ in this case (just replace $I$ by $I'$ in the proof).

\item If $\g=\sl(n,n)$, then there is a linear combination of the vectors $h_i$, $i\in I$ resulting in the identity matrix $z := I_{2n,2n}\in \h$. Then, for all $r\in \Z^\times$, the vector $Z_{r}=1\otimes z_r$ is central in $\cH_q'$, and it must act trivially on any simple graded $\cH_q$-module. In other words, any simple $\cH_q$-module is just the pull back (under the map induced by the canonical projection $\sl(n,n)\to \fpsl(n,n)$) of a simple module over the $\fpsl$-type Heisenberg algebra. Motivated by this, the $\cH_q$-modules we will consider in this paper are precisely those obtained via such pull back from diagonal modules over the $\fpsl$-type Heisenberg algebra. This implies that the vectors $Z_{r}=1\otimes z_r$ act trivially on any such module, and thus the action of $H_{|\Sigma|, r}$ is determined by the action of the other generators $H_{i,r}$, $i\in I'$. Hence, as in the $\fpsl$ case, all constructions of this subsection will follow by replacing $I$ by $I'$ and $A$ by $A'$ everywhere. Also similarly to the $\fpsl$ case, we have that Lemma~\ref{lem:H_{j,+-k}v_{mu, a}.not.zero} still holds for all $j\in I$.
\end{enumerate}
\end{rem}

\section{Induced \texorpdfstring{$\gh$}{g hat}-modules}

In this section we consider modules over $\gh$ which are induced from modules over $\cH$. For this we take a system of simple roots $\Sigma=\{\alpha_1,\ldots, \alpha_r\}$ of $\D$, and we recall that for any subset $X\subset \Sigma$ we can define $\D(X)=\Z X\cap \D$ and $\D(X)^+ = \D(X)\cap \D^+$. Then the set
\begin{align*}
\hD \supseteq P(\Sigma,X) & = \{\alpha+n\delta \mid \alpha \in\D^+\setminus \D(X)^+, n\in \Z\} \\
& \cup \{\alpha+n\delta\mid \alpha\in \D(X)\cup \{0\}, n\in \Z_{>0}\}\cup \D(X)^+
\end{align*}
is a parabolic partition of $\hD$. Moreover, any parabolic partition $P$ of $\hD$ is of this form for some choice of $\Sigma$ and of $X\subseteq \Sigma$. Borel subalgebras of $\gh$ are in bijection with parabolic partitions of $\hD$; namely, for any given parabolic partition $P$ we can define the Borel subalgebra $\B = \hh\oplus \bigoplus_{\alpha\in P}\gh_\alpha$. 

In what follows we will be mainly interested in the so called natural Borel subalgebras. These are given by choosing $X=\emptyset$. Notice that in this case $\B=\hh\oplus (\h\otimes t\C[t])\oplus (\n^+\otimes \C[t,t^{-1}])$. Now we define the following subalgebras of $\gh$:
    \[
\cU^\pm = \n^\pm\otimes \C[t,t^{-1}],\quad \cN^\pm = (\h\otimes t\C[t])\oplus \cU^\pm,\quad \cP = \h\oplus \cH \oplus \cU^+.
    \]
The subalgebra $\cP$ is a minimal parabolic subalgebra of $\gh$ containing $\B + \cH$. The Heisenberg algebra $\cH$ is called the Levi factor of $\cP$.

Let $V$ be a $\cH$-module in $\mathscr{K}_a$ and let $\lambda \in \hh^*$ such that $\lambda(c)=a$. We extend $V$ to a $\cP$-module by setting $hv=\lambda(h)v$ for all $v\in V$ and $h \in \h$, and by letting $\cU^+$ act trivially on it. Consider the induced $\Ugh$-module
\begin{align*}
    M(\lambda,V)=\Ind({\cP},{\Ugh};V)=\Ugh\otimes_{U(\cP)}V.
\end{align*}
When $V$ is simple, $M(\lambda,V)$ has a unique maximal module and hence a unique simple quotient. The $\Ugh$-module $ M(\lambda,V)$ is called the \emph{generalized imaginary Verma module}  associated with $\cP$, $V$ and $\lambda$. If $V$ is a $\varphi$-Verma module of $\cH$, then $M^\varphi(\lambda):=M(\lambda,V)$ is called the \emph{$\varphi$-imaginary Verma module} of weight $\lambda$. If $\varphi\equiv 1$, we write $M(\lambda)=M^\varphi(\lambda)$, and we have the following result from \cite{CF}.

\begin{thm}[\cite{CF}]\label{thm:simplicity_criteria_standard_imaginary}
$M(\lambda)$ is irreducible if and only if $\lambda(c)\neq 0$.
\end{thm}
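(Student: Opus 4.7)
The plan is to handle the two directions of the equivalence separately, with most of the work devoted to the sufficiency direction.

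For necessity, suppose $\lambda(c) = 0$. Then every Heisenberg commutator $[h \otimes t^n, h' \otimes t^{-n}] = n(h|h')c$ acts as zero on $M(\lambda)$, so $\h \otimes t\C[t]$ annihilates $(h \otimes t^{-1}) v_\lambda$ for any nonzero $h \in \h$, and a PBW-filtration argument on the cyclic submodule $U(\gh)(h \otimes t^{-1}) v_\lambda$ shows that $v_\lambda$ lies outside it. Concretely, filter $M(\lambda)$ by the number of negative-imaginary factors in a PBW expansion: when $c$ acts as zero, the positive half $\cU^+ \oplus (\h \otimes t\C[t])$ preserves this filtration, so the cyclic submodule generated in positive filtration degree cannot reach the filtration-zero vector $v_\lambda$.

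For sufficiency, assume $\lambda(c) = a \neq 0$. The PBW theorem for Lie superalgebras, applied to the decomposition $\gh = \cU^- \oplus (\h \otimes t^{-1}\C[t^{-1}]) \oplus \hh \oplus \cU^+ \oplus (\h \otimes t\C[t])$, yields a vector-space isomorphism $M(\lambda) \cong U(\cU^-) \otimes U(\h \otimes t^{-1}\C[t^{-1}]) \cdot v_\lambda$. I fix a total order on the resulting PBW basis so that monomials of $\cU^-$ precede monomials of $\h \otimes t^{-1}\C[t^{-1}]$, further refined by $\delta$-degree and by a chosen order on roots. It then suffices to prove that for every nonzero $w \in M(\lambda)$ one has $v_\lambda \in U(\gh) w$.

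The argument proceeds in two stages. Stage one removes the $\cU^-$ content: taking the leading monomial of $w$ under the fixed order and applying a raising element $x^+_{\beta, m} \in \cU^+$ with $m$ sufficiently large, the bracket $[x^+_{\beta, m}, x^-_{\alpha, n}]$ lands in $\h \otimes t^{m+n}$ or in a real-root component of strictly lower order, and the largeness of $m$ pushes extraneous terms strictly above the current leading monomial so they do not interfere with the cancellation. Iterating reduces $w$ to the form $w \in U(\h \otimes t^{-1}\C[t^{-1}]) \cdot v_\lambda$. Stage two removes the Heisenberg content: for the leading factor $h_j \otimes t^{-n}$ I choose $h' \in \h$ with $(h'|h_j) \neq 0$, which is possible by nondegeneracy of $(\cdot|\cdot)$ on $\h$ when $\g \not\cong \sl(n,n)$ (the cases $\g \cong \fpsl(n,n)$ and $\g \cong \sl(n,n)$ use the reduced index set $I'$ as indicated in Remark \ref{rem:H_{j,+-k}v_{mu, a}.not.zero}), and apply $h' \otimes t^n$. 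The commutator $n(h'|h_j) c$ acts as the nonzero scalar $na(h'|h_j)$, and iterating strips off all Heisenberg factors, producing a nonzero multiple of $v_\lambda$.

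The principal obstacle is stage one in the super setting, where odd real-root generators satisfy $(x^-_\alpha)^2 = 0$ and introduce Koszul signs in the PBW basis. The total order on monomials must interact correctly with parity; I would define it by first choosing a total order on $\hD^{re}$ that refines the $\Z_2$-grading and then extending lexicographically, and verify by direct case analysis that bracketing the leading term of $w$ with an appropriate element of $\cU^+$ yields a sum of strictly lower terms. Once this combinatorial point is verified, both stages of the reduction follow the classical pattern.
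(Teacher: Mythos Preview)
The paper does not prove this statement itself; it is quoted from \cite{CF}. However, the paper's own Lemma~\ref{lem.ht2} and Theorem~\ref{thm:main_1} strictly generalize it, so the relevant comparison is between your argument and the proof of Lemma~\ref{lem.ht2}.

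Your necessity argument is fine. For sufficiency, your two-stage outline (strip the $\cU^-$-content, then strip the Heisenberg content) matches the paper's strategy: Lemma~\ref{lem.ht2} does exactly stage one as a height-reduction, and stage two is absorbed into the hypothesis that $V$ is irreducible (here the Heisenberg Verma module, irreducible because $a\neq 0$). So the overall architecture is the same.

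Where your proposal has a genuine gap is the mechanism of stage one. You say: take the leading monomial, hit it with $x^+_{\beta,m}$ for $m$ ``sufficiently large,'' and the extraneous terms are ``pushed strictly above the current leading monomial.'' But you never specify the monomial order, and without that the phrase ``above'' has no content. More seriously, the bracket $[x^+_{\beta,m},x^-_{\alpha,n}]$ can land in $\cU^+$ (when $\beta-\alpha\in\D^+$), and that element must then be commuted through the remaining $\cU^-$-factors; there is no reason the largeness of $m$ alone forces the resulting contributions to be harmless. The case analysis you defer to the end (``verify by direct case analysis'') is precisely the substance of the proof. The paper handles this by fixing the explicit order \eqref{order.DZ}--\eqref{order.M} (root first, then loop degree, extended lexicographically), taking the \emph{minimal} monomial $\mathsf{m}_{j_0}$ and its rightmost factor $x^-_{\alpha_0,r_0}$, and choosing $\ell$ not ``large'' but so that $h_{\alpha_0,\ell+r_0}v_{j_0}\neq 0$ via Lemma~\ref{lem:H_{j,+-k}v_{mu, a}.not.zero}. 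The minimality of $\alpha_0$ in the root order is what kills the dangerous $\cU^+$-contributions: any $[x^+_{\alpha_0,\ell},x^-_{\alpha_0-\beta,k}]$ produces $x^+_{\beta,\ast}$ with $\beta<\alpha_0$, and by minimality no factor $x^-_{\gamma,\ast}$ with $\gamma\le\beta$ remains to bracket with, so these terms never reach $\cH$. Your ``large $m$'' heuristic does not supply an analogue of this mechanism.
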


In the next section we will generalize Theorem~\ref{thm:simplicity_criteria_standard_imaginary} to the case where $V$ is an arbitrary irreducible module in $\mathscr{K}_a$.

\medskip

\subsection{Irreducibility criterion}\label{sec:irr.criteria.classical} 

Let $\bs \mu \in \C^{I\times \Zg}$, $\lambda \in \h^*$ with $\lambda(c)=a\neq 0$, and $\mathrm{F}\subseteq \mathrm{F}_{\bs \mu,a}$. Take the $\cH$-module $V=V(\bs \mu,a, F)$ as defined in Subsection~\ref{subsec:diagonal_modules} and consider the associated generalized imaginary Verma $\Ugh$-module $M(\lambda,V)$. For simplicity, write $v_j=\phi_{(\varphi, \pi)}v_{\bs \mu, a}$ for $j=(\varphi, \pi) \in \Phi\times \Pi$, and  denote by $d_j$ the degree of the vector $v_j$. 

Consider the total order ``$\leq$'' on $\D^+$ defined as follows: for $i,j \in I$, set $\alpha_i\leq \al_j$ if $i\leq j$, then extend the ordering to $\D^+$ by setting $\alpha\leq \beta \leq \gamma$ if $\beta = \alpha+\gamma$ and $\alpha\leq \gamma$. This induces a total order on $\D^+\times \Z$ given by
\begin{equation}\label{order.DZ}
    (\alpha,r)\leq (\alpha', r')\; \text{if and only if}\; \alpha< \alpha' \; \text{or}\; \alpha= \alpha', r \leq r'.
\end{equation}
Consider now the set $\mathsf{M}$ equipped with the lexicographical order induced by the total order on $\D^+\times \Z$; that is, given $\mathsf{m}_1, \mathsf{m}_2 \in \mathsf{M}$, let $(\alpha_0,k_0)=\min\{(\alpha,k)\in \D^+\times \Z\,\mid\, \mathsf{m}_1(\alpha,k)\neq \mathsf{m}_2(\alpha,k)\}$, then
\begin{equation}\label{order.M}
    \mathsf{m}_1<\mathsf{m}_2,\; \text{if }\; \mathsf{m}_1(\alpha_0,k_0)<\mathsf{m}_2(\alpha_0,k_0). 
\end{equation}
Now, this order on $\mathsf{M}$ induces in the obvious way a total order on the set of all monomials
\begin{align*}
    &x^+_\mathsf{m}:=\prod_{(\alpha,r)\in \D^+\times \Z}^{\rightarrow} (x_{\alpha,r})^{\mathsf{m}(\alpha,r)},  &&x^-_\mathsf{m}:=\prod_{(\alpha,r)\in \D^+\times \Z}^{\leftarrow} (x_{-\alpha,r})^{\mathsf{m}(\alpha,r)},
\end{align*}
where the $\rightarrow$  over the product indicates that the product is written in increasing order from left to right with respect to the order on $\D^+\times \Z$, and the opposite for $\leftarrow$.

Since we have an isomorphism of $\hh$-modules $M(\lambda, V)\cong U(\cU^-)\otimes V$, the set of monomials 
$$\{x^-_{\mathsf{m}}v_j\,\mid\, \mathsf{m}\in \mathsf{M},\; j\in \Phi(F)\times \Pi\}$$ is a basis of $M(\lambda, V)$ consisting of weight vectors. Define $Q = \Z\Sigma$ and $Q^+=\Z_{\geq 0}\Sigma$, and let $v\in M(\lambda, V)_\mu$ be a nonzero vector of weight $\mu=\lambda-\beta +n\delta$, for some $\beta \in Q^+\setminus\{0\}$, $n\in \Z$. Since $\beta \in Q^+\setminus\{0\}$, there exist unique $a_i\in \Z_{\geq 0}$ such that $\beta = \sum_{i=1}^{N-1}a_i\alpha_i$. Define 
\begin{align*}
    \htt{v}:=\htt{\beta}=\sum_{i=1}^{N-1}a_i.
\end{align*}

Although there are some versions of the next result, none of them consider the order we fixed on the monomials of $U(\gh)$. Such order will be crucial for the next section where we study quantum deformations of generalized imaginary Verma modules.

\begin{lem}\label{lem.ht2}
Let $\bs \mu \in \C^{I\times \Zg}$, $\lambda \in \h^*$ with $\lambda(c)=a\neq 0$, and $\mathrm{F}\subseteq \mathrm{F}_{\bs \mu,a}$. Let $V$ be an $\cH$-module of the form $V(\bs\mu, a, F)$ or $L(\bs\mu,a)$. If $v\in M(\lambda, V)$ is a nonzero weight vector such that $\htt{v}> 0$, then there exists $y \in \Ugh$ such that $yv\neq 0$ and $\htt{yv}<\htt{v}$.
\begin{proof}
Write
\begin{equation}\label{v.pbw}
    v=\sum_{j\in L}a_j x^-_{\mathsf{m}_j} v_j,
\end{equation}
for some finite set $L$ and $a_j\in \C^\times$.

Let $\mathsf{m}_{j_0}=\min \{\mathsf{m}_j\,\mid\, j\in L\}$ with respect to the order \eqref{order.M}, and let $(\alpha_0,r_0)=\min \{(\alpha,r)\in \D^+\times \Z\,\mid\, \mathsf{m}_{j_0}(\alpha,r)>0\}$
with respect to the order \eqref{order.DZ}.  In other words, $\mathsf{m}_{j_0}$ is the smallest element of $\mathsf{M}$ such that $x^-_{\mathsf{m}_{j_0}}$ is a nontrivial summand in the expression \eqref{v.pbw}, and $(x^-_{\alpha_0,r_0})^{\mathsf{m}_{j_0}(\alpha_0,r_0)}$ is its right most factor. 
 
Let $\mathsf{m}'$ be the element of $\mathsf{M}$ such that $\mathsf{m}'(\alpha,r)=\mathsf{m}_{j_0}(\alpha,r)$ if $(\alpha,r)\neq (\alpha_0,r_0)$ and $\mathsf{m}'(\alpha_0,r_0)=\mathsf{m}_{j_0}(\alpha_0,r_0)-1$. Let $\ell \in \Zz$ be such that neither $\phi_{i,\ell+r_0}$ or $\phi_{i,-\ell+r_0}$ is a factor in any $v_j$ for all $j\in L$ and $i\in I$. Then, since $V\cong V(\bs \mu,a,\mathrm{F})$, by Lemma~\ref{lem:H_{j,+-k}v_{mu, a}.not.zero} (see also Remark~\ref{rem:H_{j,+-k}v_{mu, a}.not.zero}), we may choose the sign of $\ell$ so that $\phi_{i,\ell+r_0}v_{j_0}\neq 0$, and hence that $[x^+_{\alpha_0,\ell},x^-_{\alpha_0,r_0}]v_{j_0} = H_{\alpha_0,\ell+r_0}v_{j_o}\neq 0$. Since $x^+_{\alpha_0,\ell}v_j=0$ for all $j\in J$, we obtain
\begin{equation*}
    x^+_{\alpha_0,\ell}v=\sum_{j\in L}a_j [x^+_{\alpha_0,\ell},x^-_{\mathsf{m}_j}] v_j.
\end{equation*}
Now, we claim that there is only one occurrence of $x^-_{\mathsf{m}'}h_{\alpha_0,\ell+r_0}v_{j_0}$ in the basis expansion of $x^+_{\alpha_0,\ell}v$. 

It follows from the order we fixed on the monomials of $U(\gh)$ that the only way to have a nonzero bracket of elements of $\gh$ in the Leibniz expansion of $[x^+_{\alpha_0,\ell},x^-_{\mathsf{m}_j}]$ is if $x^-_{\mathsf{m}_j}$ has factors of the form: (1) $x^-_{\alpha_0,k}$; (2) $x^-_{\alpha_0 - \beta,k}$ for $\beta < \alpha_0$; or (3) $x^-_{\alpha_0+\beta,k}$ for $\alpha_0< \alpha_0+\beta$. For each factor in case (3) we obtain an element in $U(\cU^-)$, and hence with no factor in $\cH$. In the case (2), for each factor $x^-_{\alpha_0 - \beta,k}$ with $\beta < \alpha_0$ we get a factor of the form $x^+_{\beta,k+\ell}$. Since $\alpha_0$ is minimal and $\beta<\alpha_0$, in the process of commuting $x^+_{\beta,k+\ell}$ it with the remaining factors of $x^-_{\mathsf{m}_j}$ we can only produce elements of the form $x^+_{\gamma,p}$ with $\gamma<\alpha_0$ or elements in $\cU^-$, and again we get elements with no factor in $\cH$.

Finally, writing $x^-_{\mathsf{m}_j} = u_{\mathsf{m_j}}'u_{\mathsf{m_j}}$ where $u_{\mathsf{m_j}}'\in U(\cU^-)$ does not have any factor of the form $x^-_{\alpha_0, *}$, we conclude that we just have to look at $u_{\mathsf{m_j}}'[x^+_{\alpha_0,\ell},u_{\mathsf{m_j}}]$. Write $u_{\mathsf{m_j}} = (x^-_{\alpha_0,r_{j_1}})^{t_{j_1}}\cdots (x^-_{\alpha_0,r_{j_k}})^{t_{j_k}}$, let $t=t_{j_1}+\cdots + t_{j_k}$, and notice that
\begin{align*}
& u_{\mathsf{m_j}}'[x^+_{\alpha_0,\ell},u_{\mathsf{m_j}}] = u_{\mathsf{m_j}}'\sum_i^{t} x^-_{\alpha_0,r_{j_1}}\cdots h_{\alpha_0,\ell+r_{j_i}}\cdots x^-_{\alpha_0,r_{j_k}} \\
& = u_{\mathsf{m_j}}'\sum_i^{t} x^-_{\alpha_0,r_{j_1}}\cdots [h_{\alpha_0,\ell+r_{j_i}}, x^-_{\alpha_0,r_{j_i}}\cdots x^-_{\alpha_0,r_{j_k}}] + (x^-_{\alpha_0,r_{j_1}})^{t_{j_1}}\cdots(x^-_{\alpha_0,r_{j_i}})^{t_{j_i}-1}\cdots (x^-_{\alpha_0,r_{j_k}})^{t_{j_k}}h_{\alpha_0,\ell+r_{j_i}} \\
& = u_{\mathsf{m_j}}' \sum_i^{t} (x^-_{\alpha_0,r_{j_1}})^{t_{j_1}}\cdots(x^-_{\alpha_0,r_{j_i}})^{t_{j_i}-1}\cdots (x^-_{\alpha_0,r_{j_k}})^{t_{j_k}}h_{\alpha_0,\ell+r_{j_i}} \mod U(\cU^-).
\end{align*}
Hence, the only way to have $$u_{\mathsf{m_j}}'(x^-_{\alpha_0,r_{j_1}})^{t_{j_1}}\cdots(x^-_{\alpha_0,r_{j_i}})^{t_{j_i}-1}\cdots (x^-_{\alpha_0,r_{j_k}})^{t_{j_k}}h_{\alpha_0,\ell+r_{j_i}} = x^-_{\mathsf{m}'}h_{\alpha_0,\ell+r_0}$$ is if $$u_{\mathsf{m_j}}'=u_{\mathsf{m_{j_0}}}',  \,\,\,\, (x^-_{\alpha_0,r_{j_1}})^{t_{j_1}}\cdots(x^-_{\alpha_0,r_{j_i}})^{t_{j_i}-1}\cdots (x^-_{\alpha_0,r_{j_k}})^{t_{j_k}} = (x^-_{\alpha_0,r_{{j_0}_1}})^{t_{{j_0}_1}}\cdots (x^-_{\alpha_0,r_0})^{\mathsf{m}_{j_0}(\alpha_0,r_0)-1},$$ and $h_{\alpha_0,\ell+r_{j_i}} = h_{\alpha_0,\ell+r_0}$. But this implies that only one occurrence of $x^-_{\mathsf{m}'}h_{\alpha_0,\ell+r_0}v_{j_0}$ in the basis expansion of $x^+_{\alpha_0,\ell}v$ comes from $[x^+_{\alpha_0,\ell},x^-_{\mathsf{m}_{j_0}}] v_{j_0}$. This shows that $x^+_{\alpha_0+\ell\delta}v\neq 0$ and since $\htt{x^+_{\alpha_0,\ell}v}=\htt{v}-\htt{\alpha_0}<\htt{v}$, the statement follows.
\end{proof}
\end{lem}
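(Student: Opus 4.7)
The plan is to mimic the classical raising-operator argument, but to carry it out carefully with respect to the specific order that has been fixed on $\D^+\times \Z$ (and hence on PBW monomials), since this order is what will later lift to the quantum setting.

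First I would expand $v$ in the PBW basis adapted to the decomposition $M(\lambda,V)\cong U(\cU^-)\otimes V$, writing $v=\sum_{j\in L}a_j\,x^-_{\mathsf{m}_j}v_j$ with all $a_j\in\C^\times$. I would then single out the ``smallest'' PBW monomial: let $\mathsf{m}_{j_0}=\min\{\mathsf{m}_j:j\in L\}$ in the lexicographic order \eqref{order.M}, and let $(\alpha_0,r_0)=\min\{(\alpha,r):\mathsf{m}_{j_0}(\alpha,r)>0\}$ in the order \eqref{order.DZ}. Thus $(x^-_{\alpha_0,r_0})^{\mathsf{m}_{j_0}(\alpha_0,r_0)}$ is the right-most factor of $x^-_{\mathsf{m}_{j_0}}$. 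I would also fix $\mathsf{m}'$ by decreasing $\mathsf{m}_{j_0}(\alpha_0,r_0)$ by $1$; this $\mathsf{m}'$ will be the target height-dropping index.

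The candidate raising operator is $y=x^+_{\alpha_0,\ell}$, where $\ell\in\Zz$ is chosen so that $\phi_{i,\pm\ell+r_0}$ never appears in any $v_j$. By Lemma~\ref{lem:H_{j,+-k}v_{mu, a}.not.zero} (plus Remark~\ref{rem:H_{j,+-k}v_{mu, a}.not.zero} in the degenerate types) the sign of $\ell$ can be arranged so that $H_{\alpha_0,\ell+r_0}v_{j_0}\neq 0$, i.e.\ $[x^+_{\alpha_0,\ell},x^-_{\alpha_0,r_0}]v_{j_0}\neq 0$. Since $x^+_{\alpha_0,\ell}$ annihilates every $v_j$, one has $x^+_{\alpha_0,\ell}v=\sum_j a_j[x^+_{\alpha_0,\ell},x^-_{\mathsf{m}_j}]v_j$, and I will argue that the coefficient of the basis element $x^-_{\mathsf{m}'}H_{\alpha_0,\ell+r_0}v_{j_0}$ in this sum is nonzero.

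The main obstacle, and the heart of the proof, is the combinatorial bookkeeping for this last claim: I need to trace, through the Leibniz expansion of $[x^+_{\alpha_0,\ell},x^-_{\mathsf{m}_j}]$ together with the subsequent reordering into PBW form, which factors can produce a term with a Cartan-type factor $H_{\alpha_0,\ell+r_0}$ acting on $v_{j_0}$. The key observations, all driven by the chosen order, are:
\begin{enumerate}
\item a nonzero bracket can only come from factors of $x^-_{\mathsf{m}_j}$ of one of the three shapes $x^-_{\alpha_0,k}$, $x^-_{\alpha_0-\beta,k}$ with $\beta<\alpha_0$, or $x^-_{\alpha_0+\beta,k}$ with $\alpha_0<\alpha_0+\beta$;
\item the second and third shapes produce positive-root contributions $x^+_{\gamma,p}$ with $\gamma<\alpha_0$ or purely $\cU^-$-contributions during reordering, so they cannot contribute a Heisenberg factor hitting $v_{j_0}$;
\item only brackets with $x^-_{\alpha_0,k}$-factors survive, and writing $x^-_{\mathsf{m}_j}=u'_{\mathsf{m}_j}\,u_{\mathsf{m}_j}$ with $u'_{\mathsf{m}_j}\in U(\cU^-)$ free of $x^-_{\alpha_0,\ast}$ factors, the PBW reduction of $u'_{\mathsf{m}_j}[x^+_{\alpha_0,\ell},u_{\mathsf{m}_j}]v_j$ produces, modulo $U(\cU^-)$, a sum of monomials of the form $u'_{\mathsf{m}_j}(\cdots)H_{\alpha_0,\ell+r_{j_i}}v_j$.
\end{enumerate}
Matching such a term against $x^-_{\mathsf{m}'}H_{\alpha_0,\ell+r_0}v_{j_0}$ forces $u'_{\mathsf{m}_j}=u'_{\mathsf{m}_{j_0}}$, the remaining $x^-_{\alpha_0,\ast}$-string to equal the one obtained from $\mathsf{m}_{j_0}$ by dropping one copy of $x^-_{\alpha_0,r_0}$, and $\ell+r_{j_i}=\ell+r_0$, hence $j=j_0$ and the chosen $i$ is the index of $r_0$ inside $u_{\mathsf{m}_{j_0}}$. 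This uniqueness together with $a_{j_0}\neq 0$ and $H_{\alpha_0,\ell+r_0}v_{j_0}\neq 0$ forces $x^+_{\alpha_0,\ell}v\neq 0$, and since its weight is $\wt(v)+\alpha_0-\ell\delta$ one immediately gets $\htt{x^+_{\alpha_0,\ell}v}=\htt{v}-\htt{\alpha_0}<\htt{v}$, completing the plan.
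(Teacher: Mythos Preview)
Your proposal is correct and follows essentially the same argument as the paper: you pick the minimal monomial $\mathsf{m}_{j_0}$ and its minimal factor $(\alpha_0,r_0)$, choose $\ell$ via Lemma~\ref{lem:H_{j,+-k}v_{mu, a}.not.zero} so that $h_{\alpha_0,\ell+r_0}v_{j_0}\neq 0$, and then verify by the same three-case analysis and the same factorization $x^-_{\mathsf{m}_j}=u'_{\mathsf{m}_j}u_{\mathsf{m}_j}$ that the basis vector $x^-_{\mathsf{m}'}h_{\alpha_0,\ell+r_0}v_{j_0}$ occurs exactly once in $x^+_{\alpha_0,\ell}v$. The only slip is the sign in the weight formula (it should be $\wt(v)+\alpha_0+\ell\delta$), but this is irrelevant to the height computation.
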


\medskip
The next result generalizes the case $X=\emptyset$ of \cite[Theorem~5.9]{CF} to the case where we let $V$ be an arbitrary irreducible diagonal module. In the non-super case this was shown in \cite{BBFK}.

\begin{thm}\label{thm:main_1}
Let $\bs \mu \in \C^{I\times \Zg}$, $\lambda \in \h^*$ with $\lambda(c)=a\neq 0$, and $\mathrm{F}\subseteq \mathrm{F}_{\bs \mu,a}$. Let $V$ be an $\cH$-module of the form $V(\bs\mu, a, F)$ or $L(\bs\mu,a)$. For any nonzero submodule $N\subseteq M(\lambda,V)$, we have $N\cap (1\otimes V)\neq 0$. In particular, $M(\lambda,V)$ is irreducible if and only if $V$ is irreducible.
\end{thm}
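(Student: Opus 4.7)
The plan is to use Lemma~\ref{lem.ht2} as a height-reduction tool and show that any nonzero submodule $N$ of $M(\lambda,V)$ must contain a nonzero vector of height $0$, which necessarily lies in $1\otimes V$.

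First I would observe that since $N$ is $\hh$-stable, it decomposes as a sum of weight subspaces in $M(\lambda, V)$, so I may pick a nonzero weight vector $v \in N$. Using the PBW basis $\{x^-_{\mathsf{m}} v_j\}$ and the fact that each $x^-_{\mathsf{m}}$ decreases the finite root-lattice weight by an element of $Q^+$, a weight vector of weight $\lambda + n\delta$ (i.e.\ with $\htt{v}=0$) must be a $\C$-linear combination of the basis vectors with trivial $\mathsf{m}$; such vectors lie in $1\otimes V$. So it suffices to reduce to the height-$0$ case.

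Next, I apply Lemma~\ref{lem.ht2} iteratively: starting from the chosen weight vector $v \in N$ with $\htt{v}>0$, the lemma gives an element $y\in \Ugh$ with $yv\neq 0$ and $\htt{yv}<\htt{v}$. Since $N$ is $\Ugh$-stable, $yv \in N$, and (after replacing $v$ with the weight component of $yv$ of minimal height, which is again nonzero since the element produced by Lemma~\ref{lem.ht2} was obtained by applying a single $x^+_{\alpha_0,\ell}$ and the image has a well-defined weight) we strictly decrease the height. Iterating finitely many times, we reach a nonzero element of $N$ of height $0$, which by the previous paragraph lies in $1\otimes V$. Hence $N\cap (1\otimes V)\neq 0$, proving the first assertion.

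For the irreducibility criterion, first note that $1\otimes V$ is stable under $\cP=\h\oplus \cH\oplus \cU^+$: $\cU^+$ acts trivially, $\h$ acts by the scalar $\lambda$, and $\cH$ preserves $1\otimes V$ via its action on $V$; the $\cP$-submodules of $1\otimes V$ thus coincide with its $\cH$-submodules. If $V$ is reducible with proper nonzero $\cH$-submodule $W$, then $\Ugh\cdot(1\otimes W)\cong U(\cU^-)\otimes W$ is a proper nonzero submodule of $M(\lambda,V)\cong U(\cU^-)\otimes V$, so $M(\lambda,V)$ is reducible. Conversely, if $V$ is irreducible and $N\subseteq M(\lambda,V)$ is any nonzero submodule, then by the first part $N\cap (1\otimes V)$ is a nonzero $\cP$-submodule of $1\otimes V$, hence equals $1\otimes V$; therefore $N\supseteq \Ugh\cdot(1\otimes V)=M(\lambda,V)$.

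The main obstacle is already packaged inside Lemma~\ref{lem.ht2}: producing, from the PBW expansion of an arbitrary weight vector, a concrete annihilator-like element that yields a strictly shorter vector under the delicate lexicographic order chosen on the monomials. Once Lemma~\ref{lem.ht2} is in hand, the remaining argument is the standard ``push everything into $1\otimes V$'' induction on height together with the elementary reconstruction of $\cP$-stability and $\cH$-irreducibility of $1\otimes V$.
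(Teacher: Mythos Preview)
Your proposal is correct and follows essentially the same approach as the paper: reduce to height $0$ via Lemma~\ref{lem.ht2} and induction, then observe that height-$0$ vectors lie in $1\otimes V$. The paper's proof is more terse (it does not spell out the ``in particular'' direction), and note that your parenthetical about taking a weight component is unnecessary: the $y$ produced by Lemma~\ref{lem.ht2} is a single root vector $x^+_{\alpha_0,\ell}$, so $yv$ is already a weight vector.
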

\begin{proof}
Let $0\neq v\in N$. If $v\in 1\otimes V$, we are done. Suppose $v\not\in 1\otimes V$. Then $\htt{v}>0$, and Lemma~\ref{lem.ht2} allows us to use induction on $\htt{v}$ to prove the statement.
\end{proof}

\subsection{Kac modules} Recall that if $\g$ is one of Lie superalgebras $\gl(m,n)$, $\sl(m,n)$, $\fpsl(n,n)$ or $\osp(2,2m)$ for $m>1$, then we have a $\Z$-grading $\g = \g_{-1}\oplus \g_0\oplus \g_1$ which is consistent with the $\Z_2$-grading of $\g$. Such decomposition induces a similar decomposition of the affine Kac-Moody Lie superalgebra $\gh = \gh_{-1}\oplus \gh_0\oplus \gh_1$. Set $\gh_+=\gh_0\oplus \gh_1$. It is well known that there is a distinguished set of simple roots $\Sigma$ for which $\n^\pm = \n_0^\pm\oplus \g_{\pm 1}$. Fix such $\Sigma$ and consider the corresponding parabolic subalgebra $\cP_{\Sigma} = \h\oplus \cH \oplus \n^+ \otimes \C[t,t^{-1}]$. We have that
\begin{equation}\label{eq:inclusions}
\gh_+ = \n_0^-\otimes \C[t,t^{-1}]\oplus \cP_{\Sigma}\quad \text{and}\quad \gh\supset \gh_+ \supset \cP_{\Sigma}.
\end{equation}

For any $\gh_0$-module $L$ we can now define the  \emph{Kac module}
    \[
K(L) = U(\gh)\otimes_{U(\gh_+)} L,
    \]
where we let $\gh_1$ acts trivially on $L$. 

Notice that the Heisenberg algebra $\cH$ coincides with the imaginary subalgebra of $\gh_0$. Thus, for any $\cH$-module $V$ and $\lambda \in \h^*$ we can consider the $\gh_0$-module
    \[
    M_0(\lambda,V)=\Ind({\cP_0},U(\gh_0);V)=U(\gh_0)\otimes_{U(\cP_0)}V,
    \]
where $\cP_0=\cP\cap \gh_0 = \h\oplus \cH\oplus \n_0^+ \otimes \C[t,t^{-1}]$. We also set $M_{\Sigma}(\lambda, V)= \Ind({\cP_{\Sigma}},U(\gh);V)$. 

\medskip

\begin{thm}\label{thm-Kac} The following statements hold:
\begin{enumerate}
\item For any $\cH$-module $V$, we have an isomorphism of $\gh$-modules $M_{\Sigma}(\lambda, V)\cong K(M_0(\lambda,V))$.
\item Let $\bs \mu \in \C^{I\times \Zg}$, $\lambda \in \h^*$ with $\lambda(c)=a\neq 0$, and $\mathrm{F}\subseteq \mathrm{F}_{\bs \mu,a}$. Let $V$ be an $\cH$-module of the form $V(\bs\mu, a, F)$ or $L(\bs\mu,a)$. Then the Kac module $K(M_0(\lambda,V))$ is irreducible if and only if $M_0(\lambda,V)$ is irreducible.
\end{enumerate}
\end{thm}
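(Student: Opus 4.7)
My plan for (i) is to use transitivity of induction together with the fact that in the distinguished decomposition $\g_1\subset \n^+$, so $\gh_1\subset \cU^+$ and therefore acts trivially on $V$. Since $\n^+ = \n_0^+\oplus \g_1$, we get $\cP_\Sigma = \cP_0\oplus \gh_1$ as vector spaces, with $\gh_1$ a super ideal of $\cP_\Sigma$ because $[\gh_0,\gh_1]\subseteq \gh_1$ and $[\gh_1,\gh_1]=0$ (the consistent $\Z$-grading forces $[\g_1,\g_1]\subset \g_2 = 0$, and the invariant form satisfies $(\g_1|\g_1)=0$, killing the central term). Transitivity of induction then gives
$$M_\Sigma(\lambda,V) = \Ugh\otimes_{U(\cP_\Sigma)} V \cong \Ugh\otimes_{U(\gh_+)} \bigl(U(\gh_+)\otimes_{U(\cP_\Sigma)} V\bigr).$$
A PBW comparison based on the vector-space decompositions $\gh_+ = (\n_0^-\otimes \C[t,t^{-1}])\oplus \cP_\Sigma$ and $\gh_0 = (\n_0^-\otimes \C[t,t^{-1}])\oplus \cP_0$ will identify the inner tensor product $U(\gh_+)\otimes_{U(\cP_\Sigma)} V$ with $U(\gh_0)\otimes_{U(\cP_0)} V = M_0(\lambda,V)$, viewed as a $\gh_+$-module with $\gh_1$ acting trivially. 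Substituting yields the isomorphism $M_\Sigma(\lambda,V)\cong K(M_0(\lambda,V))$.

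For (ii) I would feed (i) into the two irreducibility criteria already at hand. Since $\cP_\Sigma$ is precisely the parabolic $\cP$ used in Section~\ref{sec:irr.criteria.classical}, Theorem~\ref{thm:main_1} applies to $M_\Sigma(\lambda,V)=M(\lambda,V)$ and says it is irreducible if and only if $V$ is irreducible. On the other hand, $M_0(\lambda,V)$ is the analogous generalized imaginary Verma module for the ordinary affine Lie algebra $\gh_0$ (whose imaginary subalgebra coincides with $\cH$), so the non-super analog of Theorem~\ref{thm:main_1}, established in \cite{BBFK}, tells us that $M_0(\lambda,V)$ is irreducible if and only if $V$ is irreducible. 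Transporting both irreducibility questions across the isomorphism of (i) then yields the sought equivalence: $K(M_0(\lambda,V))$ is irreducible iff $M_0(\lambda,V)$ is irreducible.

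The step requiring the most care will be the PBW identification in (i). One has to verify that extending the $\gh_0$-action on $M_0(\lambda,V)$ to a $\gh_+$-action by letting $\gh_1$ act as zero is compatible with the induction and faithfully reproduces $M_\Sigma(\lambda,V)$ after inducing up to $\gh$. After that point the argument is essentially formal, as the super and non-super irreducibility criteria both collapse to the same condition, namely irreducibility of $V$.
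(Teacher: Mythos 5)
Your proposal is correct and follows essentially the same route as the paper: transitivity of induction across $\cP_\Sigma\subset\gh_+\subset\gh$, identification of the inner induced module $U(\gh_+)\otimes_{U(\cP_\Sigma)}V$ with $M_0(\lambda,V)$ via the PBW decompositions in \eqref{eq:inclusions} and the trivial action of $\gh_1$, and then combining (i) with Theorem~\ref{thm:main_1} (and its even analogue from \cite{BBFK}) for part (ii). Your explicit justification that $\gh_1$ acts trivially on the inner induced module (using $[\g_1,\g_1]\subset\g_2=0$ and $(\g_1|\g_1)=0$) is a detail the paper states without proof, so no gap remains.
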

\begin{proof}
Part (i): The inclusions \eqref{eq:inclusions} imply that the following isomorphim of $\gh$-modules holds:
    \[
M_\Sigma(\lambda, V) = U(\gh)\otimes_{U(\cP_\Sigma)}V \cong U(\gh)\otimes_{U(\gh_+)}\left(U(\gh_+)\otimes_{U(\cP_\Sigma)}V\right).
    \]

We claim now that the $\gh_+$-modules $U(\gh_+)\otimes_{U(\cP_\Sigma)}V$ and $M_0(\lambda, V)$ are isomorphic. It follows from their definition that both modules are isomorphic as $\gh_0$-modules. Indeed, both modules are isomorphic to $U(\n_0^-\otimes \C[t,t^{-1}])\otimes_\C V$ as vector spaces, and the $\gh_0$-action on them given by the left multiplication of $U(\gh_0)$. Now the claim follows because $\gh_1$ acts trivially on both modules. Hence, we have isomorphims of $\gh$-modules
\begin{align*}
M_{\Sigma}(\lambda, V) & \cong U(\gh)\otimes_{U(\gh_+)}\left(U(\gh_+)\otimes_{U(\cP_\Sigma)}V\right) \\
& \cong U(\gh)\otimes_{U(\gh_+)}M_0(\lambda, V) \\
& =  K(M_0(\lambda, V)).
\end{align*}

Part (ii): Note that $M_0(\lambda,V)$ is irreducible if and only if $V$ is irreducible by Theorem~\ref{thm:main_1}. The statement follows from part~(i) and Theorem~\ref{thm:main_1}.
\end{proof}

\medskip

\section{Generalized imaginary Verma modules for \texorpdfstring{$U_q\slhmn$}{Uq  sl(m|n) hat}}

Throughout this section we assume that $\g$ is a Lie superalgebra of type $\sl(m,n)$. Let $N=n+m$ and recall that for any given $N$-tuple $\s := (s_1,\ldots, s_N)\in \{\pm 1\}^N$ we have a corresponding set of simple roots $\Sigma_\s$ and its corresponding Cartan matrix $A^\s$.  From now on we fix  $\s$ and we set $\Sigma = \Sigma_\s$ and $A = A^\s$. Set also $Q = \Z\Sigma$ the root lattice corresponding to $\Sigma$. Our goal is to define analogs of generalized imaginary Verma modules for the quantum superalgebra $\Uqgh$ and to prove that such modules are quantum deformations of generalized imaginary Verma modules over $\gh$.

\medskip

\subsection{\texorpdfstring{$\Uqgh$}{Uq g hat} and its \texorpdfstring{$\A$}{A}-form} Let $q$ be an indeterminate. In the {\it new Drinfeld realization}, the quantum affine superalgebra  $\Uqgh$ is the associative superalgebra over $\C(q)$ generated by vectors $X^\pm_{i,n}, H_{i,r}$, $K^{\pm 1}_i, q^{\pm c}, q^{\pm d}$,  $i \in I := \{1,\ldots, |\Sigma|\},\; n\in \Z, \; r\in\Zz$, where their parities are given by $|X^\pm_{i,r}|=|i|=(1-s_is_{i+1})/2$, and all remaining generators are even. The defining relations of $\Uqgh$ are given as follows:
\begin{align*}
&q^{\pm c} \text{ is central},\quad K_iK_j=K_jK_i,\quad K_iX^\pm_j(z)K_i^{-1}=q^{\pm A_{i,j}}X^\pm_j(z),\\
&q^dX^\pm_{i,r}q^{-d}=q^rX^\pm_{i,r},\quad q^dH_{i,r}q^{-d}=q^rH_{i,r}, \quad  q^dK_iq^{-d}=K_{i}, \\
&[H_{i,r},H_{j,s}]=\delta_{r+s,0}\,\frac{[rA_{i,j}]}{r}\frac{q^{rc}-q^{-rc}}{q-q^{-1}},\\
&[H_{i,r},X^{\pm}_j(z)]=\pm\frac{[rA_{i,j}]}{r}q^{-c(r\pm|r|)/2}z^rX^\pm_j(z),\\
&[X^+_i(z),X^-_j(w)]=\frac{\delta_{i,j}}{q-q^{-1}}\Bigl(\delta\bigl(q^{c}\frac{w}{z}\bigr)K_i^+(w)-\delta\bigl(q^c\frac{z}{w}\bigr)K_i^-(z)\Bigr),\\
& (z-q^{\pm A_{i,j}}w)X^\pm_i(z)X^\pm_j(w)+(-1)^{|i||j|}(w-q^{\pm A_{i,j}}z)X^\pm_j(w)X^\pm_i(z)=0 & (A_{i,j}\neq 0), \\
& [X^\pm_i(z),X^\pm_j(w)]=0
& (A_{i,j}=0), \\
& \Sym_{z_1,z_2}\lb{X^\pm_i(z_1),\lb{X^\pm_i(z_2),X^\pm_{i\pm 1}(w)}}=0\,
& (A_{i,i}\neq 0,\ i\pm1\in I), \\
& \Sym_{{z_1,z_2}}\lb{X^\pm_i(z_1),\lb{X^\pm_{i+ 1}(y),\lb{X^\pm_i(z_2),X^\pm_{i- 1}(w)}}}=0
& (A_{i,i}=0,\ i\pm 1 \in I),   
\end{align*}
where $z,w,z_1,z_2$ are formal commutative variables,
    \begin{align*}
&X^\pm_i(z) := \sum_{k\in \Z}X^\pm_{i,k}z^{-k}, \qquad  \delta(z):=\sum_{k\in \Z}z^{k}\\
&K_i^\pm(z) := \sum_{r\in \Z_{\geq 0}}K^\pm_{i,\pm r}z^{\mp r} = K_i^{\pm 1}\exp \left(\pm (q-q^{-1})\sum_{r>0}H_{i,\pm r}z^{\mp r}\right),
        \end{align*}
and 
    \[
\lb{X,Y} = [X,Y]_{q^{-\beta(h_\gamma)}} := XY - (-1)^{|X||Y|}q^{-\beta(h_\gamma)}YX
    \]
if $X$ and $Y$ have weights $\beta,\gamma\in Q$, respectively (i.e. $K_iXK_i^{-1}=q^{\beta(h_i)}X$ and $K_iYK_i^{-1}=q^{\gamma(h_i)}Y$). 

Notice that the defining relations depend on the choice of  $\Sigma$, however, it is known that the superalgebra $\Uqgh$ is independent of such choice. Finally, we define the following Lie subalgebras of $\Uqgh$:
\begin{itemize}
    \item[] $\cH^{\pm}_q$ the subalgebra generated by $H_{i,\pm r}$, $i\in 
    I, r\in \Z_{>0}$;
    \item[] $\cH_q$ the subalgebra generated by $\cH^{\pm}_q$, $q^d$ and $q^c$;
    \item[] $\Uqhh$ the subalgebra generated by $K_i$, $i\in I$, $q^c$ and $q^d$;
    \item[] $\cU_q^0$ the subalgebra generated by $\cH_q$ and $\Uqhh$;
    \item[] $\cU^\pm_{q}$ the subalgebra generated by $X^{\pm}_{i,r}$, $i\in 
    I, r\in \Z$;
    \item[] $\cP_{q}$ the subalgebra generated by $\cU_q^0$ and $\cU^+_{q}$;
    \item[] $\B_{q}$ the subalgebra generated by  $\Uqhh$, $\cH^{+}_q$ and $\cU^+_{q}$;
    \item[] $\cN_{q}^{\pm\varphi}$ the subalgebra generated by $\cH^{\pm \varphi}_q$ and $\cU^+_{q}$.
\end{itemize}

\medskip

\subsection{The PBWD basis}

We recall the PBW basis constructed in \cite{Tsy21}. For this we consider the total orders on $\D$ and on $\D^+\times \Z$ as in Section~\ref{sec:irr.criteria.classical}. For each $(\alpha,r)\in \D^+\times \Z$, write $\alpha=\alpha_{i_1}+\cdots +\alpha_{i_p}$, ${\alpha_{i_j}}\in \Sigma$, and fix:
\begin{enumerate}
    \item a decomposition $r=r_1+\cdots r_p$, $r_i \in \Z$;
    \item a sequence $(q_1,\dots,q_{p-1})\in \{q, q^{-1}\}^{p-1}$.
\end{enumerate}
Now, each $(\alpha,r)\in \D^+\times \Z$ defines a vector $X_{\pm \alpha,r}\in \cU_q^\pm$ as
\begin{align*}
    X_{\pm\alpha,r}:=[\cdots[[X^\pm_{i_1,r_1},X^\pm_{i_2,r_2}]_{q_1}, X^\pm_{\alpha_{i_3},r_3}]_{q_3},\dots,X^\pm_{i_p,r_p}]_{q_{p-1}}.
\end{align*}
We have that $K_iX_{\pm\alpha,r}K_i^{-1}=q^{\pm\alpha(h_i)}X_{\pm\alpha,r}$, that is, $X_{\pm\alpha,r}$ is a root vector whose associated root is $\alpha+r\delta\in \D^+\times \Z$.

As in Section~\ref{sec:irr.criteria.classical}, we let $\mathsf{M}$ be the set of all functions $\mathsf{m}:\D^+\times \Z \rightarrow \Z_{\geq 0} $ with finite support and such that $\mathsf{m}(\alpha,k)\leq 1$ if $|\alpha|=1$, and, for each $\mathsf{m}\in \mathsf{M}$ we define monomials 
\begin{align*}
    &X^+_\mathsf{m}:=\prod_{(\alpha,r)\in \D^+\times \Z}^{\rightarrow} (X_{\alpha,r})^{\mathsf{m}(\alpha,r)},  &&X^-_\mathsf{m}:=\prod_{(\alpha,r)\in \D^+\times \Z}^{\leftarrow} (X_{-\alpha,r})^{\mathsf{m}(\alpha,r)}.
\end{align*}
These monomials are called ordered PBWD monomials of $\cU_q^+$ and $\cU_q^-$, respectively. As before, the arrow $\rightarrow$  over the product indicates that the product is written in increasing order from left to right with respect to the order  on $\D^+\times \Z$, and the opposite for $\leftarrow$.

\begin{thm}{\cite[Theorem~5.7]{Tsy21}}\label{thm:PBWD}
The set of ordered monomials $\{X^\pm_\mathsf{m}\mid \mathsf{m}\in \mathsf{M}\}$ forms a linear basis of $\cU_q^\pm$.
\end{thm}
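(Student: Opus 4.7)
The plan is to prove Theorem~\ref{thm:PBWD} in two parts: first that the ordered PBWD monomials $X^\pm_{\mathsf{m}}$ span $\cU_q^\pm$, and then that they are linearly independent, by reducing to the classical PBW theorem via a specialization argument at $q\to 1$.

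For the spanning part, I would introduce a two-level filtration on $\cU_q^\pm$: the first level by total PBW degree (the number of Drinfeld generators $X^\pm_{i,r}$ appearing in a product), and the second by the lexicographic order on the corresponding multi-sets of indices $(\alpha_i, r)\in \Sigma\times \Z$. The key technical step is a \emph{straightening lemma}: for any two root-vector factors $X_{\pm\alpha,r}$ and $X_{\pm\beta,s}$ with $(\beta,s) < (\alpha,r)$ in the order on $\D^+\times\Z$, one can rewrite $X_{\pm\alpha,r}X_{\pm\beta,s}$ as a scalar multiple of $X_{\pm\beta,s}X_{\pm\alpha,r}$ plus a linear combination of products whose total PBW degree is strictly smaller (the excess coming from Cartan-type contributions in $\cU_q^0$) or whose multi-index is strictly smaller in the lexicographic order. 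Proving this requires expanding the iterated $q$-brackets defining $X_{\pm\alpha,r}$ via the quadratic commutation relations and the cubic Serre relations of $\Uqgh$, and carefully tracking the root-space bookkeeping. A standard induction on the filtration then reduces any word in the Drinfeld generators to a sum of ordered PBWD monomials.

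For linear independence, I would use Lusztig's specialization technique via the ring $\A=\{f/g\in\C(q)\mid g(1)\neq 0\}$ introduced earlier in the paper. Let $\cU^\pm_{q,\A}$ denote the $\A$-subalgebra of $\cU_q^\pm$ generated by the Drinfeld generators $X^\pm_{i,r}$; the ordered monomials $X^\pm_{\mathsf{m}}$ all lie in $\cU^\pm_{q,\A}$, and $\cU_q^\pm \cong \C(q)\otimes_{\A}\cU^\pm_{q,\A}$ by the spanning step. Specializing $q \mapsto 1$ produces a surjection $\C\otimes_\A \cU^\pm_{q,\A} \twoheadrightarrow U(\cU^\pm)$ onto the classical enveloping algebra of $\cU^\pm = \n^\pm\otimes \C[t,t^{-1}]$, under which each $X^\pm_{\mathsf{m}}$ degenerates to the corresponding classical ordered PBW monomial $x^\pm_{\mathsf{m}}$. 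Since the classical monomials are linearly independent by the PBW theorem for $U(\widehat{\g})$, this forces the quantum monomials to be linearly independent over $\C(q)$, which is what we need.

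\textbf{Main obstacle.} The delicate point is the straightening lemma, and in particular two features of the super setting. First, the root vectors $X_{\pm\alpha,r}$ defined via iterated $q$-brackets depend on auxiliary choices (the decomposition $r=r_1+\cdots+r_p$ and the sequence of twists $(q_1,\dots,q_{p-1})$), so one must verify that different admissible choices produce root vectors that agree modulo terms already ``lower'' in the filtration, making the basis statement independent of these choices. Second, when the chain of simple roots assembling $\alpha$ passes through an isotropic node (with $A_{i,i}=0$), quadratic manipulations alone are insufficient and one must invoke the cubic Serre relation; the resulting rewriting can inadvertently raise the lexicographic index unless the skew-symmetry built into the bracket is exploited carefully. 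Controlling these two issues simultaneously is what makes the super PBWD theorem substantially harder than its non-super counterpart, and is the reason the authors rely on the precise form proved in \cite{Tsy21}.
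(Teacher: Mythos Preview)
The paper does not prove this theorem at all: it is quoted verbatim from \cite[Theorem~5.7]{Tsy21} and used as a black box. So there is no ``paper's own proof'' to compare against.

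Your sketch is a reasonable outline of one possible route, but it is \emph{not} the route taken in \cite{Tsy21}. Tsymbaliuk's argument does not proceed via a straightening lemma plus $q\to 1$ specialization; instead he constructs an explicit algebra isomorphism from $\cU_q^\pm$ to a combinatorial \emph{shuffle algebra}, and deduces the PBWD basis by exhibiting the images of the ordered monomials as an explicit basis of the shuffle algebra (via a specialization/degree argument internal to the shuffle side). The shuffle approach sidesteps your ``main obstacle'' entirely: one never has to perform the delicate super-Serre straightening directly, because linear independence is read off in the shuffle model rather than inside $\Uqgh$.

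Your proposed argument is not obviously wrong, but the straightening lemma you need is essentially the whole content of the theorem and you have only asserted it. In particular, showing that the commutator $X_{\pm\alpha,r}X_{\pm\beta,s} - (\text{scalar})\,X_{\pm\beta,s}X_{\pm\alpha,r}$ lands in the span of \emph{lower} ordered monomials (and not merely shorter words in the Drinfeld generators) is exactly the hard step, and for non-simple $\alpha,\beta$ with overlapping support through an isotropic node there is no short inductive reduction; this is why \cite{Tsy21} bypasses it. Your independence half is fine in outline, but note that it presupposes the spanning half in a strong form (that the $\A$-span of the monomials is closed under multiplication), so both halves ultimately rest on the unproved straightening.
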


Finally, let $\mathsf{H}$ be the set of all functions $\mathsf{h}:I\times \Z \rightarrow \Z $ with finite support and such that $\mathsf{h}(i,r)\geq 0$ if $r\neq 0$, and define the monomials 
    \[
K^\pm_\mathsf{h}:=\prod_{(i,r)\in I\times \Z_{>0}} (K^\pm_{i,\pm r})^{\mathsf{h}(i,\pm r)},\quad K^0_\mathsf{h}:=\prod_{i\in I} (K_{i})^{\mathsf{h}(i,0)},\quad K_\mathsf{h}:=K^-_\mathsf{h}K^0_\mathsf{h}K^+_\mathsf{h}.
    \]

\begin{thm}{\cite[Remark~5.10]{Tsy21}}\label{thm:PBWD2}
The set of monomials 
    \[
\{X^-_\mathsf{m} K_\mathsf{h} q^{rc} q^{r'd} 
X^+_{\mathsf{m}'}\mid \mathsf{m}, \mathsf{m}'\in \mathsf{M}, \mathsf{h}\in \mathsf{H}, r,r' \in \Z\}
    \]
is a linear basis of $\Uqgh$. In particular, we have a triangular decomposition $\Uqgh\cong \cU_q^-\otimes \cU_q^0\otimes \cU_q^+$.
\end{thm}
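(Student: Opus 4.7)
The plan is to prove the spanning and linear independence of the claimed set separately, and then deduce the triangular decomposition as a corollary. Since Theorem~\ref{thm:PBWD} already provides bases for $\cU_q^\pm$, the essential work concerns the middle Cartan factor and the interplay between the three subalgebras.

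For spanning, I would take an arbitrary word in the generators and normal-order it into the form $\cU_q^-\cdot\cU_q^0\cdot\cU_q^+$ by induction on length. The Cartan-over-$X^\pm$ relations $K_iX^\pm_j(z)K_i^{-1}=q^{\pm A_{i,j}}X^\pm_j(z)$, $q^dX^\pm_{j,s}q^{-d}=q^sX^\pm_{j,s}$, and the commutation of $H_{i,r}$ with $X^\pm_j(z)$ let any Cartan-type generator move past an $X^\pm$ at the cost of rescaling, hence preserve the tripartition. The relations internal to $\cU_q^0$—mutual commutativity of $K_i$, $q^{\pm c}$, $q^{\pm d}$, together with the Heisenberg relations among the $H_{i,r}$—keep $\cU_q^0$ closed under multiplication. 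The only subtle commutation is $[X^+_i(z),X^-_j(w)]$, which produces $K_i^\pm(w)\in \cU_q^0$, so it too respects the tripartition. Once the word is sorted, Theorem~\ref{thm:PBWD} writes the outer factors as ordered PBWD monomials, and expanding $K_i^\pm(z)$ via its exponential definition and reordering the resulting $H_{i,\pm r}$'s by the Heisenberg relations yields the middle factor as a combination of products $K_\mathsf{h}q^{rc}q^{r'd}$.

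Linear independence is the main obstacle. I would approach it via a Diamond-Lemma-style argument: endow the candidate vector space $V=\cU_q^-\otimes \cU_q^0\otimes \cU_q^+$ (with its natural ordered basis) with a left $\Uqgh$-module structure by prescribing how each generator acts on the basis and verifying that all defining relations of $\Uqgh$ are respected. The multiplication map $m:V\to \Uqgh$ then admits a left inverse—namely, the composition of the $\Uqgh$-action with evaluation at $1\otimes 1\otimes 1$—so it is injective; together with spanning, this yields the basis. The delicate point is verifying consistency of the action for the cubic super-Serre relations at isotropic simple roots ($A_{i,i}=0$), which involve three nested brackets and behave essentially differently from the non-super case. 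An alternative is to filter $\Uqgh$ by total $X^\pm$-degree and pass to the associated graded, then use an $\A$-form/specialization argument to reduce to the classical PBW theorem for $U(\gh)$. Either way, the freeness of the middle factor $\cU_q^0$—and hence the linear independence of the monomials $K_\mathsf{h}q^{rc}q^{r'd}$—follows from the Heisenberg relations $[H_{i,r},H_{j,s}]=\delta_{r+s,0}[rA_{i,j}][rc]/r$ combined with the algebraic independence of $K_i^{\pm 1}$, $q^{\pm c}$, $q^{\pm d}$.
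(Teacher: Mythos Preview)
The paper does not prove this theorem at all: it is stated as a citation of \cite[Remark~5.10]{Tsy21} and used as a black box. So there is no ``paper's own proof'' to compare against; the relevant comparison is with Tsymbaliuk's argument in \cite{Tsy21}.

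Your spanning sketch is standard and would go through. The linear-independence part, however, is only a high-level outline, and neither of your two suggested routes is what \cite{Tsy21} actually does. Tsymbaliuk's proof proceeds via the \emph{shuffle algebra} realization: he constructs an algebra homomorphism from $\cU_q^+$ (resp.\ $\cU_q^-$) to an explicit combinatorial shuffle algebra, shows surjectivity, and verifies that the images of the ordered PBWD monomials are linearly independent there. This bypasses entirely the need to check Serre-type relations in a Diamond-Lemma construction or to control an $\A$-form specialization. The triangular decomposition in Remark~5.10 then follows from the separate bases of $\cU_q^\pm$ together with a direct argument for $\cU_q^0$.

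Your Diamond-Lemma proposal is in principle viable but would be substantially harder to execute than you indicate: the defining relations of $\Uqgh$ involve generating series, so there are infinitely many overlaps to resolve, and the quartic super-Serre relations at isotropic nodes produce ambiguities whose resolution is not routine. The specialization route is closer in spirit to how one often proves such results, but it presupposes exactly the $\A$-form machinery and the isomorphism $\ol U\cong U(\gh)$ that the present paper only invokes \emph{after} Theorem~\ref{thm:PBWD2}, so one would have to be careful about circularity. In short: your outline is not wrong, but it underestimates the work required, and it is methodologically quite different from the shuffle-algebra proof that the cited reference actually gives.
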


\medskip

\subsection{Quantum induced modules}

Set $P_{\Z}:= \{\lambda\in \h^*\mid \lambda(h_i) \in \Z\ \forall i\in I\}$. Let $V_q \in \mathscr{K}_q$ and $\lambda \in P_{\Z}$. Extend $V_q$ to a $\cP_{q}$-module by setting 
    \[
X^+_{i,n}V_q=0, \,\,\,K_i^{\pm}v=q^{\pm \lambda(h_i)}v
    \]
for any $v\in V_{q}$. Define the induced $\Uqgh$-module $M_q(\lambda, V_q)$ as
\begin{align*}
    M_q(\lambda, V_q):=\Ind({\cP_q}, {\Uqgh}; V_q).
\end{align*}
If $V_q$ is an irreducible $\cH_q$-module, then $M_q(\lambda, V_q)$ is called a \emph{quantum generalized imaginary Verma module} associated with $\cP_{q}$, $V_q$ and $\lambda$. If $V_q$ is a $\varphi$-Verma module of $\cH_q$, then $M_q^\varphi(\lambda):=M_q(\lambda, V_q)$ is called a \emph{quantum imaginary $\varphi$-Verma module} of $\Uqgh$.

We have the following basic properties of $M_q(\lambda, V_q)$:

\begin{prop}\label{prop.gen.loop}
Let $V_q$ be an irreducible $\cH_q$-module in $\mathscr{K}_q$ with a $\C(q)$-linear basis $\{v_j\}_{j\in J}$, and let $\lambda \in \hh^*$ with $\lambda(c)\neq 0$. Then,
\begin{enumerate}
    \item \label{gen.loop.1} $M_q(\lambda, V_q)$ is a free $\cU^-_q$-module generated by $\{v_j\}_{j\in J}$;
    \item \label{gen.loop.2} $\dim M_q(\lambda, V_q)_\mu=\infty$ for any $\mu = \lambda-\beta +n\delta$, $\beta \in Q^+\setminus\{0\}$, $n\in \Z$;
    \item \label{gen.loop.3} $\dim M_q(\lambda, V_q)_\mu<\infty$ if and only if $V_q$ is a $\varphi$-Verma module of $\cH_q$ with $\varphi$ constant and $\mu=\lambda- \varphi(n)n\delta$, $n\in \Z_{\geq 0}$;
    \item \label{gen.loop.4} $M_q^\varphi(\lambda)$ is a free $\cN^{-\varphi}_q$-module generated by $1\otimes v_{\varphi}$.
\end{enumerate}
\begin{proof}
The statements \eqref{gen.loop.1} and \eqref{gen.loop.2} follow from the triangular decomposition $\Uqgh= \cU_q^-\otimes  \cU_q^0 \otimes \cU_q^+$ and the PBW basis of $\cU_q^-$. Item \eqref{gen.loop.4} is similar, noting that we also have the decomposition $$\Uqgh= \cN^{-\varphi}_q\otimes \Uqhh \otimes \cN^{\varphi}_q.$$
To prove item \eqref{gen.loop.3}, notice that if $\dim M_q(\lambda, V_q)_\mu<\infty$, then $\mu=\lambda+ n\delta$ for some $n\in \Z$, and  $M_q(\lambda, V_q)_\mu=1\otimes V_{q,n}$. But $\dim V_{q,n}<\infty$ if and only if $V_{q}$ is a $\varphi$-Verma module of $\cH_q$ with $\varphi$ constant.
\end{proof}
\end{prop}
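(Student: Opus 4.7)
The plan is to leverage the triangular decomposition $\Uqgh\cong \cU_q^-\otimes \cU_q^0\otimes \cU_q^+$ from Theorem~\ref{thm:PBWD2}. Since $\cU_q^+$ annihilates $V_q$ and $\cU_q^0$ acts on $V_q$ via $\cH_q$ together with $K_i\mapsto q^{\lambda(h_i)}$, induction tautologically produces $M_q(\lambda,V_q)\cong \cU_q^-\otimes_{\C(q)}V_q$ as a left $\cU_q^-$-module, which is exactly~\eqref{gen.loop.1}.

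For~\eqref{gen.loop.2} and~\eqref{gen.loop.3}, I would inspect the weight space $M_q(\lambda,V_q)_\mu$ for $\mu=\lambda-\beta+n\delta$ via the PBWD basis $\{X^-_\mathsf{m}\otimes v_j\mid \mathsf{m}\in \mathsf{M},\ j\in J\}$. A basis element $X^-_\mathsf{m}\otimes v_j$ has weight $\lambda-\sum_{(\alpha,r)}\mathsf{m}(\alpha,r)\alpha+\bigl(\sum_{(\alpha,r)}\mathsf{m}(\alpha,r)r+d_j\bigr)\delta$, where $d_j$ is the degree of $v_j$. When $\beta\neq 0$, I fix a simple root $\alpha_i$ in the support of $\beta$; replacing a factor $X^-_{\alpha_i,r}$ of a reference monomial by $X^-_{\alpha_i,r+s}$ shifts the imaginary degree by $s\in \Z$ while preserving the $\h$-weight. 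Because an irreducible diagonal $\cH_q$-module has nonzero components in infinitely many imaginary degrees (act on any vector by a suitable $\phi_{i,\pm k}$), each shifted monomial pairs with some $v_j$ of compensating degree, producing infinitely many linearly independent basis vectors and giving~\eqref{gen.loop.2}. When $\beta=0$, one is forced to take $\mathsf{m}=0$, so $M_q(\lambda,V_q)_\mu$ identifies canonically with $1\otimes V_{q,n}$, and the finiteness question reduces to finiteness of $\dim V_{q,n}$. The quantum analog of Remark~\ref{rem:V_q.to_V}(2) states that an irreducible diagonal $\cH_q$-module has finite-dimensional graded components exactly when it is a $\varphi$-Verma module with $\varphi$ constant; in that case the nonzero graded pieces occur precisely at $n=-\varphi(k)k$ for $k\in \Z_{\geq 0}$, matching the weights in~\eqref{gen.loop.3}.

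For~\eqref{gen.loop.4}, I would upgrade the triangular decomposition to $\Uqgh\cong \cN_q^{-\varphi}\otimes \Uqhh\otimes \cN_q^{\varphi}$, where $\cN_q^{\pm\varphi}$ pairs the $\pm\varphi$-half $\cH_q^{\pm\varphi}$ of the Heisenberg subalgebra with $\cU_q^{\pm}$. The augmentation ideal of $\cN_q^{\varphi}$ annihilates $v_\varphi$ (the $\varphi$-Verma condition kills $\cH_q^{\varphi}$ and the inducing $\cP_q$-structure kills $\cU_q^+$), and $\Uqhh$ acts on $v_\varphi$ by scalars, so induction freely adjoins exactly $\cN_q^{-\varphi}$.

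The step I expect to be the main obstacle is the ``only if'' direction of~\eqref{gen.loop.3}: to conclude that $\dim V_{q,n}<\infty$ forces $V_q$ to be a constant-$\varphi$ Verma, one needs the quantum counterpart of the classification of irreducible diagonal Heisenberg modules from \cite{BBFK, FGM14} summarized in Remark~\ref{rem:V_q.to_V}(2). The remaining items are essentially formal consequences of the PBWD basis.
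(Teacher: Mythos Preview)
Your proposal is correct and follows essentially the same route as the paper: items \eqref{gen.loop.1}, \eqref{gen.loop.2}, and \eqref{gen.loop.4} are deduced from the triangular decompositions $\Uqgh\cong \cU_q^-\otimes \cU_q^0\otimes \cU_q^+$ and $\Uqgh\cong \cN_q^{-\varphi}\otimes \Uqhh\otimes \cN_q^{\varphi}$ together with the PBWD basis, while for \eqref{gen.loop.3} both you and the paper reduce to the identification $M_q(\lambda,V_q)_{\lambda+n\delta}=1\otimes V_{q,n}$ and then invoke that $\dim V_{q,n}<\infty$ forces $V_q$ to be a constant-$\varphi$ Verma module. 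You are right that this last implication is the only substantive input, and the paper, like you, simply asserts it (the classical statement is Remark~\ref{rem:V_q.to_V}(ii), and the quantum version is used without further comment).
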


\subsection{\texorpdfstring{$\A$}{A}-forms}

Recall that $\A = \{\frac{f(q)}{g(q)}\in \C(q)\mid g(1)\neq 0\}$. For $i\in I$, $k,l,n \in \Z$ with $n>0$, and $Y \in \Uqh$, we define
\begin{align*}
    \qK{K_{i}}{k,l}{n}= \prod_{r=1}^n \frac{q^{kc}K^+_{i,k+l} - q^{lc}K^-_{i,k+l}}{q^r-q^{-r}},\\
\qK{Y}{k}{n}= \prod_{r=1}^n \frac{Yq^{k-r+1} - Y^{-1}q^{r-k-1}}{q^r-q^{-r}}.
\end{align*}
Let $U_\A (\gh)$ denote the $\A$-subalgebra of $\Uqgh$ generated by $X^\pm_{i,r}, H_{i,\pm n}$, $K^{\pm 1}_i, q^{\pm c}, q^{\pm d}$, $\qK{K_i}{k}{n}$, $\qK{q^c}{k}{n}$, $\qK{q^d}{k}{n}$, $\qK{K_{i}}{k,l}{n}$, $i \in I,\; k, l, n, r\in \Z$, $n>0$.

By straightforward calculations, we have the following proposition.

\begin{prop}\label{prop:gen.UA}
The generators of $U_\A(\gh)$ satisfy the following relations.
\begin{enumerate}[itemsep=3mm]
    \item $q^{\pm c} \text{ is central},\quad K_iK_j=K_jK_i,\quad K_ix^\pm_j(z)K_i^{-1}=q^{\pm A_{i,j}}X^\pm_j(z)$;
    \item $q^d X^\pm_{i,r} q^{-d}= q^r X^\pm_{i,r}  $;
    \item $\qK{K_{j}}{k}{n}X^\pm_{i,r}=X^\pm_{i,r}\qK{K_{j}}{k\pm A^\s_{i,j}}{n}$;
    \item $\qK{q^d}{k}{n}X^\pm_{i,r}=X^\pm_{i,r}\qK{q^d}{k+r}{n}$;
    \item $\qK{q^d}{k}{n}H^\pm_{i,r}=H^\pm_{i,r}\qK{q^d}{k+r}{n}$;
    \item $[H_{i,k},H_{j,l}]=\delta_{k-l,0}\,\dfrac{[kA_{i,j}]}{k}\qK{q^c}{k}{1}$;
    \item $[X^+_{i,r},X^-_{j,s}]=\delta_{i,j}\qK{K_{i}}{r,s}{1}$.
    \item $[H_{i,r},X^{\pm}_j(z)]=\pm\frac{[rA_{i,j}]}{r}q^{-c(r\pm|r|)/2}z^rX^\pm_j(z),$
    
\end{enumerate}
\end{prop}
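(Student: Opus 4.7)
The plan is to verify each of the eight listed relations by a direct manipulation starting from the defining relations of $\Uqgh$ and the explicit formulas for the divided-power symbols $\qK{\cdot}{\cdot}{n}$. First I would dispatch (1), (2), and (8), which are nothing more than restatements of defining relations of $\Uqgh$ already recorded in the definition; no computation is needed beyond recognizing the generators on both sides.

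For (3), (4), and (5) the key ingredient is a single conjugation-shift identity for divided powers: if $Y Z Y^{-1} = q^s Z$ in $\Uqgh$ for some scalar shift $s$, then factor by factor in
\[
\qK{Y}{k}{n} = \prod_{r=1}^n \frac{Yq^{k-r+1} - Y^{-1}q^{r-k-1}}{q^r - q^{-r}}
\]
the relation $Z \cdot \qK{Y}{k}{n} = \qK{Y}{k+s}{n}\cdot Z$ holds. This is proved by a clean induction on $n$ after doing the $n=1$ case directly. Applying this identity with $(Y,Z) = (K_j, X^\pm_{i,r})$ and $s = \pm A_{i,j}$ gives (3); with $(Y,Z) = (q^d, X^\pm_{i,r})$ and $s = r$ gives (4); and with $(Y,Z) = (q^d, H_{i,r})$ and $s = r$ gives (5), using the already established conjugation rules $K_j X^\pm_{i,r} K_j^{-1} = q^{\pm A_{i,j}} X^\pm_{i,r}$ and $q^d Z q^{-d} = q^r Z$.

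For (6), the plan is to rewrite the defining commutation
\[
[H_{i,r}, H_{j,s}] = \delta_{r+s,0}\frac{[rA_{i,j}]}{r} \frac{q^{rc} - q^{-rc}}{q - q^{-1}}
\]
in the integral format of the proposition by reading off $\qK{q^c}{k}{1}$ from its definition and matching the scalar factor. For (7), the strategy is to extract coefficients of $z^{-r} w^{-s}$ on both sides of the generating-function relation
\[
[X^+_i(z), X^-_j(w)] = \frac{\delta_{i,j}}{q - q^{-1}}\bigl(\delta(q^c w/z)K^+_i(w) - \delta(q^c z/w) K^-_i(z)\bigr),
\]
using $\delta(q^c w/z) = \sum_{k \in \Z} q^{kc} (w/z)^k$. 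The coefficient one obtains is exactly $\qK{K_i}{r,s}{1} = \frac{q^{rc} K^+_{i,r+s} - q^{sc} K^-_{i,r+s}}{q - q^{-1}}$ by definition.

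The only point requiring genuine care, and the one I expect to be the main obstacle, is the induction on $n$ for the shift identity underlying (3)--(5): one must keep track of how $q^{k-r+1}$ and $q^{r-k-1}$ interact with $Z$ when $Z$ is moved across each factor, and verify that the shifted parameter $k + s$ propagates uniformly through every factor of the product. All of the computations take place first in $\Uqgh$ over $\C(q)$; since every symbol appearing in each relation is by construction one of the generators of $U_\A(\gh)$, the identities automatically restrict to the $\A$-form without any additional integrality argument.
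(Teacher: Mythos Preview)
Your plan is correct and is exactly what the paper does: its proof consists of the single sentence ``By straightforward calculations, we have the following proposition,'' and your proposal simply spells out those calculations. One minor slip to fix when you execute: in your shift lemma for (3)--(5), if $YZY^{-1}=q^{s}Z$ then the correct identity is $\qK{Y}{k}{n}\,Z = Z\,\qK{Y}{k+s}{n}$, not $Z\,\qK{Y}{k}{n}=\qK{Y}{k+s}{n}\,Z$ as you wrote (the sign of the shift depends on which side $Z$ starts on), though this is harmless and would surface immediately in the $n=1$ step.
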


As a consequence of Proposition~\ref{prop:gen.UA} we have the following corollary. The proof in the supersymmetric case is identical to that in the even case given in \cite{CFM}.

\begin{cor} Let $\cU_\A^\pm = U_\A(\gh)\cap \cU_q^\pm$ and $\cU_\A^0=U_\A(\gh)\cap\cU_q^0$. Then, the subalgebra $U_\A(\gh)$ has the triangular decomposition
\begin{align*}
    U_\A(\gh)=\cU_\A^-\otimes \cU_\A^0\otimes \cU_\A^+.
\end{align*}
\end{cor}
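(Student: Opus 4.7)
The plan is to combine the triangular decomposition of $\Uqgh$ over $\C(q)$ (Theorem~\ref{thm:PBWD2}) with a surjectivity-and-integrality argument showing that every element of $U_\A(\gh)$ already lies in $\cU_\A^-\cdot\cU_\A^0\cdot\cU_\A^+$. The reverse inclusion $\cU_\A^-\cdot\cU_\A^0\cdot\cU_\A^+\subseteq U_\A(\gh)$ is immediate from the definitions, and once both inclusions are in place the directness (i.e.\ that the multiplication map is an isomorphism of $\A$-modules) is inherited from the $\C(q)$-vector space triangular decomposition of Theorem~\ref{thm:PBWD2} by a standard faithful flatness argument, since $\C(q)$ is the localization of $\A$ at a multiplicative set and $\cU_\A^\pm,\cU_\A^0$ are $\A$-forms of $\cU_q^\pm,\cU_q^0$.

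To prove $U_\A(\gh)\subseteq \cU_\A^-\cdot\cU_\A^0\cdot\cU_\A^+$, I would argue by induction on the length of a word in the generators of $U_\A(\gh)$. The inductive step reduces to moving every factor of type $X^+$ to the rightmost position and every factor of type $X^-$ to the leftmost position, while collecting the remaining $\cU_\A^0$-generators in the middle. The commutation relations needed for this are exactly those recorded in Proposition~\ref{prop:gen.UA}. The crucial integrality check is that each commutator produced in the reordering stays inside $U_\A(\gh)$: relation (vii) shows that $[X^+_{i,r},X^-_{j,s}]=\delta_{i,j}\qK{K_i}{r,s}{1}$, which is by construction a generator of $U_\A(\gh)$; relations (iii)--(v) show that the quantum binomial brackets and $K_i$'s pass through $X^\pm$'s only at the cost of shifting indices; relations (vi) and (viii) show that the $H$-type commutators introduce scalars of the form $[rA_{i,j}]/r$, which lie in $\A$ since $[r]/r\in 1+(q-1)\C[q,q^{-1}]\subseteq \A$, together with a factor of $\qK{q^c}{k}{1}$ which is again a generator of $U_\A(\gh)$.

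Next, I would verify that the PBWD root vectors $X_{\pm\alpha,r}\in\cU_q^\pm$ defined in Section 3.2 actually belong to $U_\A(\gh)$, by induction on the height of $\alpha$: the base case is clear since $X_{\pm\alpha_i,r}=X^\pm_{i,r}$, and the inductive step follows because the iterated brackets $[\,\cdot\,,\,\cdot\,]_{q_j}$ with $q_j\in\{q,q^{-1}\}$ of elements of $U_\A(\gh)$ remain in $U_\A(\gh)$. Similarly, the ``$K$-monomials'' $K_{\mathsf h}$ are products of generators of $U_\A(\gh)$. Combining these observations with the first paragraph, the normalization procedure expresses any word in the generators of $U_\A(\gh)$ as an $\A$-linear combination of PBWD monomials $X^-_{\mathsf m}K_{\mathsf h}q^{rc}q^{r'd}X^+_{\mathsf m'}$, which is exactly the basis of Theorem~\ref{thm:PBWD2}, and each factor lies in the appropriate $\cU_\A^{-},\cU_\A^0,\cU_\A^+$.

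The main obstacle is the bookkeeping in the superalgebra setting when commuting through odd root vectors: one has to track sign changes coming from the super-bracket and verify that no denominators outside $\A$ appear when rewriting the Serre-type cubic and quartic relations in PBWD form. As the authors note, the argument is structurally identical to the non-super case treated in \cite{CFM}, so the work reduces to verifying that the signs and the specific $q$-brackets used in Tsymbaliuk's construction \cite{Tsy21} do not spoil the integrality of the structure constants---this ultimately comes down to the fact that $\frac{[rA_{i,j}]}{r},\,\frac{1}{[n]!}$-free expressions built from $\qK{\,\cdot\,}{\,\cdot\,}{n}$ are precisely the objects included as generators of $U_\A(\gh)$.
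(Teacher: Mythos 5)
Your proposal is correct and takes essentially the same route as the paper, which simply observes that the decomposition follows from the commutation relations of Proposition~\ref{prop:gen.UA} by the argument of the even case in \cite{CFM}. You have merely filled in the details (reordering words via the relations, integrality of the structure constants, membership of the PBWD root vectors in $U_\A(\gh)$, and directness via flatness over $\A$) that the paper leaves to the reference.
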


Note that the generators of $\cU_\A^\pm$ and  $\cU_q^\pm$ coincide, and all coefficients obtained through commuting the generators lie in $\A$. Therefore, the $\A$ version of Theorem~\ref{thm:PBWD} holds, i.e., the PBWD monomials $X^\pm_\mathsf{m}, \mathsf{m}\in \mathsf{M}$, form a linear basis of $\cU_\A^\pm$.

\medskip

Now we construct the $\A$-form of diagonal $\cH_q$-modules.

Let $\bs \mu \in \C(q)^{I\times \Zg}$ and 
    \[
V_q\in \{V_q(\bs \mu, a, F)\mid \mathrm{F}\subseteq \mathrm{F}_{\bs \mu,a}\}\quad  (\text{resp. } V\in \{V(\bs \mu, a, F)\mid \mathrm{F}\subseteq \mathrm{F}_{\bs \mu,a}\})
    \]
as defined in Section~\ref{subsec:diagonal_modules} (see also Remark~\ref{rem:V_q.to_V}). Recall that $V_q$ is isomorphic to a quotient of $\cH_q$ by a left ideal $\mathscr{L}_{\bs \mu,a}$ contained in the left ideal generated by the set $\{\phi_{i,k}\phi_{i,-k}-\mu_{i,k},\ q^c-q^a,\ q^d\mid (i,k)\in I\times\Zg\}\cup \{(\phi_{i,-\varphi(i,k)k})^{\frac{\mu_{i,k}}{[ka]}}\mid \frac{\mu_{i,k}}{[ka]}>0,\ \forall (i,k) \in \mathrm{F}_{\bs \mu,a}\}\cup \{(\phi_{i,-\varphi(i,k)k})^{1-\frac{\mu_{i,k}}{[ka]}}\mid \frac{\mu_{i,k}}{[ka]}\leq 0,\ \forall (i,k) \in \mathrm{F}_{\bs \mu,a}\}$. In fact, if $V_q=V_q(\bs \mu, a, F_{\bs\mu,a})$ then $\mathscr{L}_{\bs \mu,a}$ equals this set.

Let $\cH_\A=U_\A(\gh)\cap\cH_q$. Since $\mu_{i,k}\in \C(q)$ for all $(i,k)\in I\times \Zg$, say $\mu_{i,k}=\frac{f_{i,k}(q)}{g_{i,k}(q)}$, we can choose generators of $\mathscr{L}_{\bs \mu,a}$ in $\cH_\A$ (just multiply each $\phi_{i,k}\phi_{i,-k}-\mu_{i,k}$ by $g_{i,k}(q)$). Let $\mathscr{L}_{\bs \mu,a}^\A$ be the ideal of $\cH_\A$ generated by these elements, and define 
\begin{align*}
    V_\A=\cH_\A/\mathscr{L}_{\bs \mu,a}^\A.
\end{align*}

\begin{rem}\label{rem:A-forms}
Usually the $\A$-form of a module $V_q$ is constructed by fixing a suitable basis of $V_q$ and taking the $\A$-form of $V_q$ to be the $\A$-module generated by such basis. Then it is proved that such $\A$-module is invariant under the action of the algebra generators. We notice that this construction coincides with ours in the case that $\bs \mu \in \A^{I\times \Z_{>0}}$, i.e., there is no $\mu_{i,k}$ with pole at $q=1$. Hence, assuming that $\bs \mu \in \A^{I\times \Z_{>0}}$, we have that the basis $\{v_j\}_{j\in J}$ of $V_q$ constructed in  Section~\ref{subsec:diagonal_modules} is also an $\A$-basis for $V_\A$. Moreover, we will see below that if $\mu_{i,k}$ has a pole at $q=1$, then the classical limit of $V_\A$ will vanish.
\end{rem}

The $\A$-form $M_\A(\lambda, V_\A)$ of $M_q(\lambda, V_q)$ is defined as the $U_\A$-submodule of $M_q(\lambda, V_q)$ generated by $V_\A$. That is 
\begin{align*}
    M_\A(\lambda, V_\A):=\Ind({\cP_\A},{U_\A}; V_\A).
\end{align*}
As a consequence of the PBW Theorem and the triangular decompositions of $U_\A$, we have the $\A$-form version of Proposition~\ref{prop.gen.loop}.

\begin{lem}
Let $V_q$ be a $\cH_q$-module as above and assume that there is no $\mu_{i,k}$ with pole at $q=1$. Let $\{v_j\}_{j\in J}$ be an $\A$-basis of $V_\A$ as in Remark~\ref{rem:A-forms}, and let $\lambda \in P$ with $\lambda(c)\neq 0$. Then the following statements hold:
\begin{enumerate}
    \item \label{gen.loop.A1} $M_\A(\lambda, V_\A)$ is a free $\cU^-_\A$-module generated by $\{1\otimes v_j\}_{j\in J}$;
    \item \label{gen.loop.A2} $M_\A^\varphi(\lambda)$ is a free $\cN^{-\varphi}_\A$-module generated by $1\otimes v_{\varphi}$.
\end{enumerate}
\end{lem}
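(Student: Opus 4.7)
The plan is to mimic the proof of Proposition~\ref{prop.gen.loop} at the level of $\A$-forms, using the $\A$-version of the PBWD basis (already noted in the paragraph following the corollary to Proposition~\ref{prop:gen.UA}) together with the $\A$-form of the triangular decomposition $U_\A(\gh)=\cU^-_\A\otimes \cU^0_\A\otimes \cU^+_\A$.

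First, I would check that $V_\A$ is a well-defined $\cP_\A$-module whose underlying $\A$-module is free on $\{v_j\}_{j\in J}$. Under the hypothesis that no $\mu_{i,k}$ has a pole at $q=1$, Remark~\ref{rem:A-forms} already provides the stated $\A$-basis. I would then verify that each generator of $\cP_\A$ preserves this $\A$-span: $\cU^+_\A$ acts by zero; each $K_i^{\pm 1}$ scales by $q^{\pm\lambda(h_i)}\in\A$ since $\lambda\in P_\Z$; $q^d$ acts on $V_{q,r}$ by $q^r\in\A$; the divided-power generators $\qK{K_i}{k}{n}$, $\qK{q^c}{k}{n}$, $\qK{q^d}{k}{n}$ and $\qK{K_i}{k,l}{n}$ all act by elements of $\A$ (directly from the defining formulas, using that the relevant weights are integers); and the $\cH_\A$-action is $\A$-linear by the very construction of $V_\A$ as the quotient $\cH_\A/\mathscr{L}^\A_{\bs\mu,a}$.

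For part (i), the $\A$-form triangular decomposition exhibits $U_\A(\gh)$ as a free right $\cP_\A$-module with basis the PBWD monomials $\{X^-_\mathsf{m}\mid\mathsf{m}\in\mathsf{M}\}$, whence
\[
M_\A(\lambda,V_\A) \;=\; U_\A(\gh)\otimes_{\cP_\A} V_\A \;\cong\; \cU^-_\A\otimes_\A V_\A,
\]
and the right-hand side has $\A$-basis $\{X^-_\mathsf{m}\otimes v_j\mid \mathsf{m}\in\mathsf{M},\,j\in J\}$. This is precisely the freeness of $M_\A(\lambda,V_\A)$ as a $\cU^-_\A$-module on $\{1\otimes v_j\}_{j\in J}$. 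For part (ii), I would replace the triangular decomposition of $U_\A(\gh)$ by the refined decomposition $U_\A(\gh) = \cN^{-\varphi}_\A\otimes (\Uqhh\cap U_\A(\gh))\otimes \cN^\varphi_\A$ used in the proof of Proposition~\ref{prop.gen.loop}\eqref{gen.loop.4}; since passing from $\cU^\pm_q$ to $\cN^{\pm\varphi}_q$ only redistributes the Heisenberg generators between the two factors, the $\A$-form is equally well behaved, the $\varphi$-Verma module $V^\varphi_\A(a)$ is $\A$-free on $v_\varphi$ (with PBW-type basis of $\cH^{-\varphi}_\A$), and the same tensor manipulation yields $M^\varphi_\A(\lambda)\cong \cN^{-\varphi}_\A\otimes_\A (1\otimes v_\varphi)$.

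The main obstacle I anticipate is verifying that the $\A$-forms intertwine the two triangular decompositions cleanly, i.e.\ that the tensor-product decompositions above hold as decompositions of $\A$-modules and not merely after extending scalars to $\C(q)$. This amounts to the $\A$-analogs of Theorems~\ref{thm:PBWD} and~\ref{thm:PBWD2}, which, as already observed, follow from the fact that the PBWD structure constants lie in $\A$; once these are in place, the induction argument is routine and parallel to the $\C(q)$ case established in Proposition~\ref{prop.gen.loop}.
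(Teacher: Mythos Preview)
Your proposal is correct and follows essentially the same approach as the paper: the paper does not give a separate proof of this lemma, but states it ``as a consequence of the PBW Theorem and the triangular decompositions of $U_\A$,'' which is precisely what you carry out (in considerably more detail, including the verification that the $\cP_\A$-action preserves the $\A$-span of $\{v_j\}$).
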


Recall that $M_q(\lambda, V_q)$ has a basis $\{X^-_\mathsf{m} v_j\, \mid\, \mathsf{m}\in \mathsf{M},\, j \in J\}$. Thus, we have a $\C(q)$-linear map 
$$M_q(\lambda, V)\rightarrow \C(q)\otimes_\A M_\A(\lambda, V_\A)$$ given by $X^-_\mathsf{m} v_j \mapsto 1\otimes X^-_\mathsf{m} v_j$, whose inverse is given by $f\otimes v \mapsto fv$ for all $f \in \C(q)$ and $v \in M_\A(\lambda, V_\A)$. Notice that this isomorphism preserves weight spaces. That is, we have the following lemma.

\begin{lem} The following isomorphisms of $\C(q)$-vector spaces hold:
\begin{enumerate}
    \item $M_q(\lambda, V_q)\cong \C(q)\otimes_\A M_\A(\lambda, V_\A)$;
    \item $M_q(\lambda, V_q)_\mu\cong \C(q)\otimes_\A M_\A(\lambda, V_\A)_\mu$, where $M_\A(\lambda, V_\A)_\mu=M_q(\lambda, V_q)_\mu\cap M_\A(\lambda, V_\A)$;
    \item $M_\A(\lambda, V_\A)=\bigoplus_{\mu \in \hh^*} M_\A(\lambda, V_\A)_\mu$.
\end{enumerate}
\end{lem}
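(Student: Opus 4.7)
The plan is to deduce all three assertions from the freeness statements of the previous lemma together with the $\A$-form of the PBWD theorem, after observing that the distinguished spanning set consists of weight vectors.

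First, by the previous lemma $M_\A(\lambda, V_\A)$ is a free $\cU_\A^-$-module on $\{1\otimes v_j\}_{j\in J}$, while the $\A$-version of Theorem~\ref{thm:PBWD} (already noted in the paragraph preceding the $\A$-form construction of $V_\A$) says that the ordered monomials $\{X^-_\mathsf{m}\mid \mathsf{m}\in\mathsf{M}\}$ form an $\A$-basis of $\cU_\A^-$. Combining these, $\B:=\{X^-_\mathsf{m} v_j \mid \mathsf{m}\in\mathsf{M},\ j\in J\}$ is an $\A$-basis of $M_\A(\lambda, V_\A)$, and hence $\{1\otimes X^-_\mathsf{m} v_j\}$ is a $\C(q)$-basis of $\C(q)\otimes_\A M_\A(\lambda, V_\A)$. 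By Proposition~\ref{prop.gen.loop}\eqref{gen.loop.1} the same set $\B$ is a $\C(q)$-basis of $M_q(\lambda, V_q)$. Therefore the map $M_q(\lambda, V_q)\to \C(q)\otimes_\A M_\A(\lambda, V_\A)$ sending $X^-_\mathsf{m} v_j \mapsto 1\otimes X^-_\mathsf{m} v_j$ is a bijection on these bases and extends to a $\C(q)$-linear isomorphism, establishing (i).

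Next, I would verify that every element of $\B$ is a weight vector. Each root vector $X_{-\alpha,r}$ has $\hh$-weight $-\alpha+r\delta$ by construction in Section~4.2, and each $v_j=\phi_{(\varphi,\pi)}v_{\bs\mu,a}$ is a weight vector of weight $\lambda+d_j\delta$ because the generators $\phi_{i,\pm k}$ carry pure $\delta$-weights and $v_{\bs\mu,a}$ has weight $\lambda$ on $\hh$. Consequently $X^-_\mathsf{m} v_j$ is a weight vector, so grouping $\B$ by weight gives $M_q(\lambda, V_q)_\mu$ as the $\C(q)$-span of the elements of $\B$ of weight $\mu$. The intersection $M_q(\lambda, V_q)_\mu\cap M_\A(\lambda, V_\A)$ must then equal the $\A$-span of those same basis elements: if an element $w$ in the intersection is expanded uniquely in $\B$, only summands of weight $\mu$ can appear (by linear independence of the weight-graded pieces), and the coefficients lie in $\A$ because $w\in M_\A(\lambda, V_\A)$. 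Under the isomorphism of (i) these two spans correspond via $\C(q)\otimes_\A(-)$, yielding (ii).

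Finally, (iii) is immediate from the above description: since $\B$ is an $\A$-basis of $M_\A(\lambda, V_\A)$ and each member of $\B$ lies in exactly one weight space, the $\A$-module $M_\A(\lambda, V_\A)$ splits as the internal direct sum $\bigoplus_{\mu\in\hh^*}M_\A(\lambda, V_\A)_\mu$ of the $\A$-spans of the weight-graded subfamilies of $\B$. The only real subtlety, and the step I would take most care with, is the identification $M_\A(\lambda, V_\A)_\mu=M_q(\lambda, V_q)_\mu\cap M_\A(\lambda, V_\A)$ with the $\A$-span of weight-$\mu$ elements of $\B$; this is a standard unique-expansion argument, but it is the place where one must use that $\A\subseteq\C(q)$ is such that weight coefficients of a vector in $M_\A(\lambda, V_\A)$ really lie in $\A$, which is precisely how $M_\A(\lambda, V_\A)$ was constructed.
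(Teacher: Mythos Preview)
Your argument is correct and follows the same approach as the paper: the paper observes (in the paragraph immediately preceding the lemma) that $\{X^-_\mathsf{m} v_j\}$ is simultaneously a $\C(q)$-basis of $M_q(\lambda,V_q)$ and an $\A$-basis of $M_\A(\lambda,V_\A)$, that the map $X^-_\mathsf{m} v_j\mapsto 1\otimes X^-_\mathsf{m} v_j$ with inverse $f\otimes v\mapsto fv$ gives the isomorphism, and that this isomorphism preserves weight spaces. You have simply spelled out these steps in more detail, in particular the verification that the basis elements are weight vectors and the unique-expansion argument for (ii) and (iii).
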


\medskip

\subsection{Classical limit}

Consider $U'=\A/\langle q-1\rangle\otimes_\A U_\A(\gh)$ and $\ol{U}=U'/K'$, where $K'$ is the ideal of $U'$ generated by $\{K_i-1,\ q^c-1, q^d - 1\}$. Let $\ol{X}$ denote the image in $\ol{U}$ of an element $X\in U'$. It was proved in \cite[Lemma~6.6.1]{Yam96} that $\ol{U}\cong U(\gh)$. Moreover, this isomorphism maps the generator $\ol{X_{\alpha, k}}$ to $x_{\alpha, k}$ for all $\alpha\in \hD$ and $k\in \Z$.

Let $M_\A$ be equal to $V_\A$ or $M_\A(\lambda, V_\A)$ as considered in the previous section. We define the classical limit of $M_\A$ to be
\begin{align*}
    \overline{M}_\A=\A/\langle q-1\rangle \otimes_\A M_\A.
\end{align*}
For all $m\in M_\A$, we will denote the vector $1\otimes m\in \overline{M}_\A$ by $\overline{m}$. Since $\cH \cong \A/\langle q-1\rangle \otimes_\A \cH_\A$, we have that $\overline{V}_\A$ is a module over $\cH$. Moreover, since $K'\overline{M}_\A=0$ we also conclude that $\ol{M_\A(\lambda, V_\A)}$ is a module over $U(\gh)$.

\begin{rem}\label{rem-faithful}
Assume that, for some $(i,k)\in I\times \Zg$, we have $\mu_{i,k}=\frac{f(q)}{(q-1)g(q)}$, that is, $\mu_{i,k}$ has a pole at $q=1$. In this case $\overline{V}_\A=0$, and hence $\ol{M_\A(\lambda, V_\A)}=0$.
\end{rem}

Recall that 
    \[
V_q\in \{V_q(\bs \mu, a, F)\mid \mathrm{F}\subseteq \mathrm{F}_{\bs \mu,a}\}\quad  (\text{resp. } V\in \{V(\bs \mu, a, F)\mid \mathrm{F}\subseteq \mathrm{F}_{\bs \mu,a}\}).
    \]

Remark \ref{rem-faithful} motivates the following definition.

\begin{dfn}\label{def-limit-faithful}
We call $V_q$ \emph{limit faithful} if there is no $\mu_{i,k}$ with pole at $q=1$.
\end{dfn}

\begin{lem}\label{lem:clasical_limit=classical_module}
Assume that $V_q$ is limit faithful. Then we have isomorphisms: $\ol{V}_\A\cong V$ as $\cH$-modules, and $\ol{M_\A(\lambda, V_\A)}\cong M(\lambda, V)$ as $\gh$-modules.
\end{lem}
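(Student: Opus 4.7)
The plan is to first reduce the statement to an identification of defining ideals, and then to transport the free module structure of $M_\A(\lambda, V_\A)$ over $\cU_\A^-$ through the classical limit.

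For the first isomorphism $\overline{V}_\A \cong V$, I would start by recalling that the Yamane-type isomorphism $\ol{U} \cong U(\gh)$ restricts to an isomorphism $\ol{\cH_\A} \cong \cH$ (sending $\ol{H_{i,r}}$ to $h_{i,r}$, $\ol{q^c}$ to a group-like whose relation $q^c - 1$ gives $c$-action, and similarly for $q^d$). Since the classical limit functor $\C\otimes_\A(-)$ is right exact, applying it to the short exact sequence $0\to \mathscr{L}_{\bs\mu,a}^\A \to \cH_\A \to V_\A \to 0$ yields $\overline{V}_\A \cong \cH / \ol{\mathscr{L}_{\bs\mu,a}^\A}$. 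The task is then to verify that $\ol{\mathscr{L}_{\bs\mu,a}^\A}$ is precisely the defining left ideal of $V = V(\bs\mu(1),a,\mathrm{F})$ inside $\cH$. The limit-faithfulness hypothesis is exactly what guarantees this: each generator $\phi_{i,k}\phi_{i,-k} - \mu_{i,k}$ (after clearing the denominator of $\mu_{i,k}$ by $g_{i,k}(q)$, which evaluates to $g_{i,k}(1)\neq 0$ at $q=1$) becomes, up to a nonzero scalar, $\phi_{i,k}\phi_{i,-k} - \mu_{i,k}(1)$, and the generators $q^c - q^a$ and $q^d$ descend to $c - a$ and $d$ respectively. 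The power relations in $\mathrm{F}$ survive because $[ka]\in \Zz$ at $q=1$.

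For $\overline{M_\A(\lambda, V_\A)} \cong M(\lambda, V)$, I would exploit the free module structure: $M_\A(\lambda, V_\A)$ is a free $\cU_\A^-$-module on any $\A$-basis $\{v_j\}_{j\in J}$ of $V_\A$. Tensoring over $\A$ with $\C$ commutes with the tensor product decomposition, so
\begin{align*}
\overline{M_\A(\lambda, V_\A)} \cong \overline{\cU_\A^-}\otimes_\C \overline{V}_\A \cong U(\cU^-)\otimes_\C V
\end{align*}
as $\C$-vector spaces, where $\overline{\cU_\A^-}\cong U(\cU^-)$ follows by the Yamane isomorphism combined with the PBWD basis (which has the same shape as the classical basis). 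On the other hand, $M(\lambda,V) \cong U(\cU^-)\otimes_\C V$ as $\hh$-modules by the classical PBW theorem. To upgrade this $\C$-linear isomorphism to a $\gh$-module isomorphism, I would trace the generators: the $\U_\A(\gh)$-action on $M_\A(\lambda,V_\A)$ factors through the classical limit relations described in Proposition~\ref{prop:gen.UA} (whose $q\to 1$ specializations are precisely the classical loop commutation relations), and the $K_i$-action specializes to $h_i$ acting by $\lambda(h_i)$ on $1\otimes V_\A$, matching how $M(\lambda,V)$ is built.

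The main obstacle is bookkeeping the classical limits of the quantum integer brackets $\qK{K_i}{k,l}{n}$ and similar expressions so that the commutation relation $[X^+_{i,r}, X^-_{j,s}] = \delta_{i,j}\qK{K_i}{r,s}{1}$ degenerates correctly to $[x^+_{i,r}, x^-_{j,s}] = \delta_{i,j}(h_{i,r+s} + r\delta_{r,-s}(x_i^+|x_i^-)c)$; this is ensured by Yamane's identification $\ol{U}\cong U(\gh)$ but must be invoked explicitly. Once the algebra action is correctly identified in the limit, the identification of bases coming from the free-module structure produces a $\gh$-equivariant isomorphism $\overline{M_\A(\lambda,V_\A)}\cong M(\lambda,V)$, completing the proof.
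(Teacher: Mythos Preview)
Your proposal is correct and follows essentially the same strategy as the paper: both arguments rest on Yamane's isomorphism $\ol{U}\cong U(\gh)$ together with the PBWD/free-module structure of $M_\A(\lambda,V_\A)$ over $\cU_\A^-$. The only difference is presentational: for $\ol{V}_\A\cong V$ the paper simply writes down the basis bijection $\ol{v_j}\mapsto v_j$ (available by Remark~\ref{rem:A-forms}) and checks it is $\cH$-equivariant, whereas you obtain the same conclusion by right-exactness of $\C\otimes_\A(-)$ and tracking the defining ideal; for the second isomorphism your argument and the paper's are the same.
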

\begin{proof}
The former isomorphism follows from the fact that the linear isomorphism that maps $\ol{v_j}$ to $v_j$ is a homomorphism of $\cH$-modules. For the latter isomorphism, we similarly consider the linear isomorphism that maps $\ol{X^-_\mathsf{m} v_j}$ to $x^-_\mathsf{m} v_j$. The fact that this is a homomorphism of $\gh$-modules follows from the fact that $\ol{U}\cong U(\gh)$ and that such isomorphism of algebras maps the generator $\ol{X_{i, k}^\pm}$ to $x_{i, k}^\pm$ for all $i\in I$ and $k\in \Z$. Thus, for any $\ol{X^-_\mathsf{m}}$ of $\ol{U}$ we have that the PBW expansion of $\ol{X_{i,k}^\pm}\ \ol{X^-_\mathsf{m}}$ in $\ol{U}$ is mapped to the PBW expansion of $x_{i,k}^\pm x^-_\mathsf{m}$ in $U(\gh)$.
\end{proof}

\begin{lem}\label{lem.ht.quantum}
Let $V_q$ be limit faithful, and let $\lambda \in P_{\Z}$ with $\lambda(c)\neq 0$. Then, for any nonzero weight vector $v\in M_q(\lambda, V_q)$ with $\htt{v}>0$, there exists $X \in \Uqgh$ such that $Xv\neq 0$ and $\htt{Xv}<\htt{v}$.

\begin{proof}
Let $v\in M_q(\lambda, V)$ be a nonzero vector of weight $\mu$ such that $\htt{v}>0$. Thus, $\mu$ must be of the form $\mu=\lambda-\beta +n\delta$, for some $\beta \in Q^+\setminus\{0\}$, $n\in \Z$. Write 
$$v=\sum_{j\in L}a_j(q) X^-_{\mathsf{m}_j} w_j$$ as in Lemma~\ref{lem.ht2}.

Without loss of generality we may assume that $a_j(q)\in \A$ for all $j\in L$ and $a_j(1)\neq 0$ for some $j\in L$. Hence $v\in M_\A(\lambda, V_\A)\setminus\{0\}$, and $$\ol{v}=\sum_{j\in L}a_j(1) \ol{X^-_{\mathsf{m}_j}} \ol{v_j}\neq 0$$ in $\overline{M_\A(\lambda, V_\A)}\cong M(\lambda, V)$ by Lemma~\ref{lem:clasical_limit=classical_module}. By Lemma~\ref{lem.ht2}, there exists $x_{\alpha, k}\in \cU^+$ such that $x_{\alpha, k}\ol{v}\neq 0$ and $\htt{x_{\alpha, k}\ol{v}}<\htt{\ol{v}}$. This implies $X_{\alpha, k}v\neq 0$. Since $\htt{X_{\alpha, k}v}<\htt{v}$ the statement follows.
\end{proof}
\end{lem}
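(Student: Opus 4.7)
The plan is to reduce the statement to its classical counterpart, Lemma~\ref{lem.ht2}, by exploiting the $\A$-form machinery that has just been established. The key observation is that Lemma~\ref{lem:clasical_limit=classical_module} provides an isomorphism $\overline{M_\A(\lambda, V_\A)}\cong M(\lambda, V)$ of $\gh$-modules whenever $V_q$ is limit faithful, and that the classical limit of the PBWD root vector $X^\pm_{\alpha,k}$ is exactly the classical root vector $x^\pm_{\alpha,k}$. So if I can find a nonzero vector in $M_\A(\lambda,V_\A)$ whose classical limit is nonzero, I can borrow the classical operator from Lemma~\ref{lem.ht2} and lift it to the quantum side.

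Concretely, I would start by writing $v$ in the PBW basis
$$v = \sum_{j\in L} a_j(q)\, X^-_{\mathsf{m}_j} w_j,$$
where $L$ is finite, each $w_j$ is a basis element of $V_q$ coming from the diagonal module construction in Section~\ref{subsec:diagonal_modules}, and $a_j(q)\in \C(q)$. After multiplying $v$ by a suitable element of $\C(q)^\times$ (which affects neither nonvanishing nor the height), I may assume that every $a_j(q)\in \A$ and that the minimal order of $(q-1)$ in the $a_j(q)$'s is zero; that is, every $a_j(q)\in \A$ and at least one $a_{j_0}(1)\neq 0$. Then $v\in M_\A(\lambda, V_\A)$, and because limit faithfulness ensures that $\overline{w_j}=\overline{v_{\boldsymbol{\mu},a}}$-type vectors form an $\A$-basis of $V_\A$ whose classical images are linearly independent in $V$, the classical image
$$\overline{v} = \sum_{j\in L} a_j(1)\, x^-_{\mathsf{m}_j} w_j \in M(\lambda, V)$$
is nonzero and shares the weight of $v$, so $\htt{\overline{v}}=\htt{v}>0$.

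Now Lemma~\ref{lem.ht2} applied to $\overline{v}$ produces a pair $(\alpha_0,\ell)\in \D^+\times \Z$ such that $x^+_{\alpha_0,\ell}\overline{v}\neq 0$ and $\htt{x^+_{\alpha_0,\ell}\overline{v}}<\htt{\overline{v}}$. Lifting back, the quantum PBWD root vector $X^+_{\alpha_0,\ell}\in \cU_\A^+$ reduces modulo $\langle q-1\rangle$ to $x^+_{\alpha_0,\ell}$ under the isomorphism $\overline{U}\cong U(\gh)$; this compatibility, together with the module isomorphism $\overline{M_\A(\lambda,V_\A)}\cong M(\lambda,V)$, yields
$$\overline{X^+_{\alpha_0,\ell}\, v} \;=\; x^+_{\alpha_0,\ell}\,\overline{v} \;\neq\; 0.$$
Hence $X^+_{\alpha_0,\ell} v\neq 0$ in $M_q(\lambda,V_q)$, and since $X^+_{\alpha_0,\ell}$ has weight $\alpha_0+\ell\delta$, the weight computation gives $\htt{X^+_{\alpha_0,\ell} v}=\htt{v}-\htt{\alpha_0}<\htt{v}$, which is the desired conclusion.

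The only delicate step is the rescaling that guarantees a nonzero classical limit: one must be careful that $v$ has a well-defined, nonzero image in the quotient $\A/\langle q-1\rangle\otimes_\A M_\A(\lambda, V_\A)$. This relies on the linear independence of the PBWD monomials $X^-_\mathsf{m} w_j$ in $M_q(\lambda,V_q)$ (which is Proposition~\ref{prop.gen.loop}\eqref{gen.loop.1} combined with Theorem~\ref{thm:PBWD}) together with the fact that the analogous classical monomials $x^-_\mathsf{m} v_j$ remain linearly independent in $M(\lambda,V)$ when $V_q$ is limit faithful; only under limit faithfulness does the $\A$-basis $\{w_j\}$ of $V_\A$ specialize to a genuine basis of $V$. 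This compatibility of bases is what makes the classical-to-quantum transfer rigorous, and it is the main point where the hypothesis on $V_q$ is used.
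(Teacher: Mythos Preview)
Your proposal is correct and follows essentially the same route as the paper: normalize $v$ into $M_\A(\lambda,V_\A)$ so that its classical limit $\overline{v}$ is nonzero, apply Lemma~\ref{lem.ht2} to $\overline{v}$ to obtain $x^+_{\alpha,k}$ lowering the height, and then lift $X^+_{\alpha,k}$ back via Lemma~\ref{lem:clasical_limit=classical_module} and the identification $\overline{X^+_{\alpha,k}}=x^+_{\alpha,k}$. Your write-up is in fact more explicit than the paper's on the rescaling step and on why limit faithfulness guarantees that the PBW basis specializes correctly at $q=1$.
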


\begin{thm}\label{thm-irred}
Let $V_q$ be limit faithful, and let $\lambda \in P_{\Z}$ with $\lambda(c)\neq 0$. For any non-trivial submodule $W\subseteq M_q(\lambda, V_q)$, we have $W\cap (1\otimes V_q)\neq 0$. 

\begin{proof}
Let $w\in W$ and assume $w$ is a weight vector. If $\htt{w}=0$, then $w\in 1\otimes V$. If $\htt{w}>0$, then by Lemma~\ref{lem.ht.quantum}, there exists $X\in \cU_q^+$ such that $Xw\neq 0$ and the statement follows by induction.

\end{proof}
\end{thm}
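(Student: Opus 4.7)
The plan is to mirror the proof of the classical Theorem~\ref{thm:main_1}, with Lemma~\ref{lem.ht.quantum} serving as the quantum replacement for Lemma~\ref{lem.ht2}. First I would note that the triangular decomposition in Theorem~\ref{thm:PBWD2}, combined with the fact that $V_q$ is $\Uqhh$-semisimple (with each $K_i$ acting by $q^{\lambda(h_i)}$ and $q^d$ by the grading scalar), yields a weight-space decomposition of $M_q(\lambda, V_q)$ with respect to $\hh$. Consequently every submodule $W$ is itself a direct sum of weight spaces, so I may pick a nonzero weight vector $w\in W$ at the outset.

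Next I would argue by induction on the nonnegative integer $\htt{w}$. In the base case $\htt{w}=0$, the weight of $w$ lies in $\lambda + \Z\delta$; by Proposition~\ref{prop.gen.loop}\eqref{gen.loop.1} the module $M_q(\lambda, V_q)$ is free over $\cU_q^-$ with basis $\{1\otimes v_j\}_{j\in J}$, and every nontrivial PBWD monomial $X^-_{\mathsf m}$ shifts the weight by an element of $-Q^+\setminus\{0\}$ of strictly positive height. Hence only the $\mathsf{m}=0$ terms can contribute to the basis expansion of $w$, forcing $w\in 1\otimes V_q$, and we are done.

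For the inductive step, assume $\htt{w}>0$. Lemma~\ref{lem.ht.quantum} produces $X\in \cU_q^+$ with $Xw\neq 0$ and $\htt{Xw}<\htt{w}$. Since $W$ is a $\Uqgh$-submodule, $Xw\in W$, and the inductive hypothesis applied to $Xw$ yields $W\cap (1\otimes V_q)\neq 0$. The entire difficulty has been absorbed into Lemma~\ref{lem.ht.quantum}, which itself rests on the classical-limit identification $\overline{M_\A(\lambda, V_\A)}\cong M(\lambda, V)$ of Lemma~\ref{lem:clasical_limit=classical_module} together with the classical height-reduction Lemma~\ref{lem.ht2}; once those ingredients are in hand, the proof at this point is essentially a formal induction, so I do not anticipate any genuinely hard step remaining.
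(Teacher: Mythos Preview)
Your proposal is correct and follows essentially the same approach as the paper: pick a nonzero weight vector in $W$, handle the base case $\htt{w}=0$ by observing that such a vector must lie in $1\otimes V_q$, and for $\htt{w}>0$ invoke Lemma~\ref{lem.ht.quantum} to reduce the height and induct. You have in fact supplied more detail than the paper does (the justification for choosing a weight vector and the PBWD argument in the base case), but the structure is identical.
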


From Theorem \ref{thm-irred} we immediately obtain the following irreducibility criterion for quantum generalized imaginary Verma modules.

\begin{cor}\label{cor-irred}
Let $V_q$ be limit faithful, and let $\lambda \in P_{\Z}$ with $\lambda(c)\neq 0$. Then $M_q(\lambda, V_q)$ is irreducible if and only if $V_q$ is irreducible.
\end{cor}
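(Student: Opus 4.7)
The plan is to derive the corollary as a fairly direct consequence of Theorem~\ref{thm-irred} and the PBW-type freeness properties of $M_q(\lambda, V_q)$ established in Proposition~\ref{prop.gen.loop}. I would organize it as two implications.

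For the implication ``$V_q$ irreducible $\Rightarrow$ $M_q(\lambda, V_q)$ irreducible'', I would take a nonzero submodule $W\subseteq M_q(\lambda, V_q)$. By Theorem~\ref{thm-irred} we have $W\cap (1\otimes V_q)\neq 0$. The intersection $W\cap (1\otimes V_q)$ is a $\cP_q$-submodule of $1\otimes V_q$ since both $W$ (as a $U_q\widehat{\g}$-submodule) and $1\otimes V_q$ are $\cP_q$-stable; under the identification $1\otimes V_q\cong V_q$ this cuts out an $\cH_q$-submodule, using that the action of $\cU_q^+$ kills $1\otimes V_q$ and that $\Uqhh$ acts by scalars determined by $\lambda$. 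Since $V_q$ is assumed irreducible, this submodule must equal all of $1\otimes V_q$. Then $W\supseteq 1\otimes V_q$, and since $1\otimes V_q$ generates $M_q(\lambda, V_q)$ as a $U_q\widehat{\g}$-module, we conclude $W=M_q(\lambda, V_q)$.

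For the converse ``$M_q(\lambda, V_q)$ irreducible $\Rightarrow$ $V_q$ irreducible'', I would argue by contrapositive. Suppose $V_q'\subsetneq V_q$ is a proper nonzero $\cH_q$-submodule. Because $\cU_q^+$ acts trivially on $V_q$ and $\Uqhh$ acts by scalars tied to $\lambda$, the subspace $V_q'$ is automatically a $\cP_q$-submodule, so we can form the induced module $M_q(\lambda, V_q'):=\Ind(\cP_q, U_q\widehat{\g}; V_q')$. The inclusion $V_q'\hookrightarrow V_q$ induces a map $M_q(\lambda, V_q')\to M_q(\lambda, V_q)$, which by the triangular decomposition of Theorem~\ref{thm:PBWD2} and part~\eqref{gen.loop.1} of Proposition~\ref{prop.gen.loop} (both modules are free $\cU_q^-$-modules on the respective underlying vector spaces) is injective with proper nonzero image. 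This produces a proper nonzero submodule of $M_q(\lambda, V_q)$, contradicting irreducibility.

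The only subtle point is the first implication, since it relies on Theorem~\ref{thm-irred}, whose proof in turn invokes the classical limit argument of Lemma~\ref{lem.ht.quantum}; this is the step that requires the hypotheses that $V_q$ be limit faithful and $\lambda(c)\neq 0$. The converse direction, by contrast, works for any $V_q$ in $\mathscr{K}_q$ and any $\lambda$ and is essentially formal from freeness of induction, so I do not expect any obstacle there. Overall there are no substantial new calculations: the result is a clean corollary assembled from Theorem~\ref{thm-irred} and the structural results of Proposition~\ref{prop.gen.loop}.
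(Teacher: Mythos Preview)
Your proposal is correct and follows the same line as the paper, which simply records the corollary as an immediate consequence of Theorem~\ref{thm-irred}; you have spelled out the two implications exactly as one would expect. One small remark: Proposition~\ref{prop.gen.loop} is stated under the hypothesis that $V_q$ is irreducible, so in the converse direction it is cleaner to appeal directly to the triangular decomposition of Theorem~\ref{thm:PBWD2} (which you also cite) for the $\cU_q^-$-freeness of the induced modules, rather than to part~\eqref{gen.loop.1}.
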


In particular, we get the irreducibility criterion for quantum imaginary $\varphi$-Verma modules.

\begin{cor}
Let $\lambda \in P_{\Z}$ with $\lambda(c)\neq 0$ and $\varphi:I\times \Z_{>0}\rightarrow \{\pm\}$. Then the quantum imaginary $\varphi$-Verma module $M_q^\varphi(\lambda)$ is irreducible.
\end{cor}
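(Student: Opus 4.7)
The plan is to deduce this directly from Corollary~\ref{cor-irred} applied to the specific inducing module $V_q = V^\varphi(a)$, where $a := \lambda(c) \in \Zz$. By the definition of the quantum imaginary $\varphi$-Verma module, $M_q^\varphi(\lambda) = M_q(\lambda, V^\varphi(a))$, so it suffices to verify the two hypotheses of Corollary~\ref{cor-irred} for $V^\varphi(a)$: irreducibility as an $\cH_q$-module, and limit faithfulness in the sense of Definition~\ref{def-limit-faithful}.

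For irreducibility, I would first compute the $\phi_{i,k}\phi_{i,-k}$-eigenvalues on the cyclic vector $v_{\varphi,a}$. Combining the defining annihilation relation $\phi_{i,\varphi(i,k)k}v_{\varphi,a} = 0$ with the Heisenberg commutation \eqref{eq:defining.Heis}, a short computation gives $\phi_{i,k}\phi_{i,-k}v_{\varphi,a} = \mu_{i,k}v_{\varphi,a}$, where $\mu_{i,k} = [ka]$ if $\varphi(i,k) = +1$ and $\mu_{i,k} = 0$ if $\varphi(i,k) = -1$. Since $\mu_{i,k}/[ka] \in \{0,1\}$ for every $(i,k)$, the identification recorded in Subsection~2.4 shows that $V^\varphi(a) \cong V_q(\bs\mu, a, \mathrm{F}_{\bs\mu,a})$, and the latter is irreducible by construction.

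For limit faithfulness I would simply note that each eigenvalue $\mu_{i,k}$ lies in $\{0,[ka]\} \subset \C[q,q^{-1}] \subset \A$, so no pole at $q=1$ can occur. With both hypotheses in place, Corollary~\ref{cor-irred} immediately delivers irreducibility of $M_q^\varphi(\lambda)$. There is no substantial obstacle here: the statement is essentially a specialization of Corollary~\ref{cor-irred} to the combinatorially simplest choice of inducing module, and both verifications reduce to a direct use of the Heisenberg relations together with the bookkeeping already developed in Section~2.
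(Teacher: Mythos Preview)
Your proposal is correct and follows precisely the route the paper intends: the corollary is stated without proof as an immediate specialization of Corollary~\ref{cor-irred}, and you have simply spelled out the two required verifications (irreducibility of $V^\varphi(a)$ via the identification $V^\varphi(a)\cong V_q(\bs\mu,a,\mathrm{F}_{\bs\mu,a})$ from Subsection~2.4, and limit faithfulness from $\mu_{i,k}\in\{0,[ka]\}\subset\A$). There is nothing to add.
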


\section*{Acknowledgments}
	L.\,B.\ is supported by the FAPESP (2020/14281-5). L.\,C.\ is supported by the FAPEMIG (APQ02768-21). V.\,F.\ is supported in part by the CNPq (304467/2017-0) and by the Fapesp (2018/23690-6).

\medskip

\end{document}